\numberwithin{equation}{section}
\theoremstyle{plain}
\newtheorem{thm}{Theorem}[section]
\theoremstyle{remark}
\newtheorem{rem}{Remark}
\theoremstyle{plain}
\newtheorem{lem}{Lemma}
\theoremstyle{plain}
\theoremstyle{definition}
\newtheorem{defn}{Definition}
\theoremstyle{plain}
\newcommand{\mathleft}{\@fleqntrue\@mathmargin0pt}
\newcommand{\mathcenter}{\@fleqnfalse}
\begin{document}

\begin{frontmatter}
\title{High-dimensional Adaptive Minimax Sparse Estimation with Interactions}
\runtitle{Adaptive Minimax Estimation with Interactions}

\begin{aug}
  \author{\fnms{Chenglong}  \snm{Ye}\corref{}\thanksref{}\ead[label=e1]{yexxx323@umn.com}},
  \author{\fnms{Yuhong} \snm{Yang}\corref{}\thanksref{}\ead[label=e2]{yangx374@umn.edu}}%

  \runauthor{C. Ye and Y. Yang}

  \affiliation{University of Minnesota}

  \address{School of Statistics\\ University of Minnesota \\
   313 Ford Hall, 224 Church St SE\\
   Minneapolis, MN 55455, USA\\ 
          \printead{e1,e2}}

\end{aug}

\begin{abstract}
\sloppy High-dimensional linear regression with interaction effects is broadly applied in research fields such as bioinformatics and social science. In this paper, we first investigate
the minimax rate of convergence for regression estimation in high-dimensional sparse linear models with two-way
interactions. We derive matching upper and lower bounds under three
types of heredity conditions: strong heredity, weak heredity and no
heredity. From the results: (i) A stronger heredity condition may or may not drastically improve the minimax rate of convergence. In fact, in some situations, the minimax rates of convergence are the same under all three heredity conditions; (ii) The minimax rate of convergence is determined by the maximum of the total price of estimating the main effects and that of estimating the interaction effects, which goes beyond purely comparing the order of the number of non-zero main effects $r_1$ and non-zero interaction effects $r_2$; (iii) Under any of the three heredity conditions, the estimation of the interaction terms may be the dominant part in determining the rate of convergence for two different reasons: 1) there exist more interaction terms than main effect terms or 2) a large ambient dimension makes it more challenging to estimate even a small number of interaction terms. Second, we construct an adaptive estimator that achieves the minimax rate of convergence regardless of the true heredity condition and the sparsity indices $r_1, r_2$.
\end{abstract}
\begin{keyword}[class=MSC]
\kwd[Primary ]{62C20}
\kwd[; secondary ]{62J05}
\end{keyword}
\begin{keyword}
\kwd{Minimax rate of convergence}
\kwd{sparsity}
\kwd{high-dimensional regression}
\kwd{quadratic model}
\kwd{interaction selection}
\kwd{heredity condition}
\kwd{hierarchical structure}
\kwd{adaptive estimation}
\end{keyword}
\end{frontmatter}
\section{Introduction}

High-dimensional data are increasingly prevalent in various areas such as
bioinformatics, astronomy, climate science and social science. When
the number of variables $p$ is larger than the sample size $n$ in
the linear regression setting, statistical estimation of the regression
function often requires
some crucial conditions. One common condition is the sparsity of the
data generating model, under which only a small portion of the variables
are important to affect the response variable. Under this condition, both sparse estimation
of high-dimensional linear regression functions and variable selection have been well studied with fruitful theoretical
understandings in the recent decade. Minimax estimation of the regression function with main effects only are well investigated under $l_{q}$-sparsity constraints with $0\leq q\leq1$
\citep[e.g.,][]{van2007deterministic,candes2007dantzig,bunea2007aggregation,zhang2008sparsity,van2008high,van2009conditions,bickel2009simultaneous,zhang2010,knight2000asymptotics,raskutti2011minimax,Rigollet:2011fz,wang2014adaptive}; model selection consistency results are also obtained for various model selection procedures \citep[e.g.,][]{fan2001variable,zhao2006model,zhang2008sparsity,zou2008,lv2009unified}. 

However, models with only main effects are often not adequate to fully
capture the nature of the data. Interaction terms may be necessary to not only improve the prediction performance but also
enhance the understanding of the relationships among the variables,
especially in areas such as genetics, medicine and behavioristics,
where interaction effects between the covariates are of enormous
interest. Hierarchical constraints are often imposed to describe the
underlying structure of models with interaction effects, such as the
marginality principle \citep{Nelder:1977ke}, the effect heredity
principle \citep{hamada1992analysis} and the ``well-formulated models''
\citep{peixoto1987hierarchical}. We follow a popular naming convention
of heredity conditions as adopted in \citet{10.2307/3315687}: strong
heredity and weak heredity. Strong heredity assumes that if an interaction
term is in the model, then both of its corresponding main effects
should also be included, while weak heredity only requires that at
least one of its main effects should be included. In practice, it
is possible that, compared to the interaction terms, some main effects
are so small that including them in modeling may not be beneficial
from the perspective of estimation variability. Thus, in this work we 
take into consideration the additional case where no heredity condition is imposed
at all, also for the purpose of theoretical comparison with the other
two heredity conditions. 

Many approaches are proposed for interaction selection, most of which
can be categorized into two types: {\it joint selection} and {\it stage-wise
selection}. The joint selection approach selects the main and interaction
terms simultaneously by searching over all possible models with interactions.
A typical way of joint selection is to use regularization methods
with specially designed penalty terms. For example, \citet{yuan2009structured}
introduced a family of shrinkage estimators, which incorporate the
hierarchical structures through linear equality constraints on the
coefficients and possess both selection consistency and root-$n$
estimation consistency under fixed $p$. \citet{choi2010variable}
re-parameterized the regression model with interactions and applied an
adaptive $L_{1}$-norm penalty. The estimators have the oracle property \citep{fan2001variable} when $p=o(n^{1/10})$.
\citet{Hao:2017ed} proposed a computationally efficient regularization
algorithm under marginality principle (RAMP) that simultaneously selects
the main effects, interaction effects and quadratic effects for high-dimensional data $p\gg n$.
They also verified the interaction selection consistency property of
the two-stage LASSO under some sensible conditions. The stage-wise selection
procedure first performs a main effect selection (by excluding the interaction
terms) to reduce the dimension of variables and then carries out
a joint selection on the reduced dimension of variables, which is
computationally feasible and effective. For example, viewing the sliced
inverse regression \citep{li1991sliced} from a likelihood perspective,
\citet{Jiang:2014hj} suggested a stage-wise variable selection algorithm
(SIRI) via inverse regression, which is able to detect higher order
interactions without imposing any hierarchical structures. \citet{Hao:2014dg}
proposed two stage-wise interaction selection procedures, IFORT and
IFORM, both of which enjoy sure screening property in the first stage.
\citet{fan2016interaction} proposed a method, named the interaction
pursuit, that incorporates both screening and variable selection in
ultra-high dimensions. The method possesses both the sure screening property and the oracle property in the two stages respectively. For some other works on interaction selection,
see \citealt{zhao2009composite,li2012feature,bien2013lasso,hall2014selecting}.
While having the aforementioned good properties, both types of interaction selection approaches have their own
disadvantages as well. The joint selection is usually computational infeasible
(insufficient storage) when $p$ is large; the stage-wise selection,
as pointed out in \citet{Hao:2014dg}, may be very difficult to be
theoretically justified under general conditions.

Although there have been many novel developments on selection of interaction
terms as described above, little work has been done on the estimation
of the regression function when interactions exist. In this paper,
we present some theoretical results on the minimax rate of convergence for estimating
the high-dimensional regression function with interaction terms under three different
hierarchical structures. Regardless of the heredity condition, our results
show that the minimax rate is determined by the maximum of
the total estimation price of the main effects and that of the interaction
effects. Heredity conditions enter the minimax rate of convergence
in terms of the estimation price of the interaction effects, namely
$r_{2}(1+\log(K/r_{2}))/n$, where $r_{2}$ is the number of non-zero
interaction effects and $K$ is the number of eligible candidate interaction
terms under the different heredity conditions. Consequently, a stronger heredity condition
leads to possibly faster minimax rate of convergence. For example,
when the underlying model has no more than $r_{1}$ non-zero main
effects, at most $K=\binom{r_{1}}{2}$ interaction terms are allowed
to enter the model under strong heredity, compared to $K=r_{1}(p_{n}-(r_{1}+1)/2)$
under weak heredity. As will be seen, only in certain situations is the minimax rate improved
by imposing the strong heredity, although
strong heredity allows fewer eligible interaction terms than the other
two heredity conditions. Also, from the perspective of estimation,
there may be no difference in rate of convergence between weak heredity and no heredity in
many situations. Our results provide a complete characterization and comparison of the minimax rates of convergence under the three heredity conditions.

In real applications, since one does not know the true heredity condition behind the data (or practically the best heredity condition to describe the data at the given sample size), it is desirable to construct an estimator that performs optimally no matter which of the three heredity conditions holds. Such an estimator that adapts to the true heredity condition as well as the unknown number of main and interaction effects will be obtained in this paper. 

The remainder of the paper is organized as follows. In Section \ref{sec:Preliminaries},
we introduce the model setup, the loss function and the heredity conditions
for the problem. In Section \ref{sec:Minimax-rate-of}, after stating
the required assumption, we present our main results of the minimax
rate of convergence under strong heredity. The theoretical results under weak heredity and
no heredity are presented in Section \ref{sec:Minimax-rate-of-1}.
Section \ref{subsec:details} provides detailed rates of convergence under different heredity conditions in relation to the sparsity indices, the ambient dimension and the sample size, followed by Section \ref{subsec:implication} where we present some interesting implications of the detailed results. In Section \ref{sec:extension}, we extend our results to quadratic models in  which both quadratic and interaction effects are considered. In Section \ref{sec:Conclusion-and-Discussion}, we construct an adaptive estimator that achieves the minimax rate of convergence without knowledge of the type of the heredity condition or the sparsity indices ($r_1$ and $r_2$). The proofs of our
results and some technical tools are presented in the Appendix.

\section{\label{sec:Preliminaries}Preliminaries}

\paragraph{Model Setup}

\sloppy Suppose the dataset is composed of $(\mathbf{X},\mathbf{Y})$, where
$\mathbf{X}=(\mathbf{x}_{1},...,\mathbf{x}_{p})$ is a $n\times p$
matrix with $n$ observations on $p$ covariates and $\mathbf{Y}=(y_{1},...,y_{n})^{T}$
is the response vector. We start by considering a linear regression
model with both main effects and two-way interaction effects:
\begin{eqnarray}
\mathbf{Y} & = & \mathbf{Z}\beta+\epsilon,\label{eq:0}
\end{eqnarray}
where $\beta=((\beta^{(1)})^{T},(\beta^{(2)})^{T})^{T}$ is the overall
coefficient vector, $\mathbf{Z}=(\mathbf{X},[\mathbf{XX}])\in\mathbb{R}^{n\times(\frac{p^{2}+p}{2})}$
is the full design matrix, and the random noise vector $\epsilon\sim N(\mathbf{0},\sigma^{2}I_{n})$
with known $\sigma$. More specifically, $\beta^{(1)}\in\mathbb{R}^{p}$
and $\beta^{(2)}\in\mathbb{R}^{\binom{p}{2}}$ are the coefficients
of the main effects and the two-way interaction effects respectively.
Here we define $[\mathbf{XX}]=(\mathbf{x}_{1}\circ\mathbf{x}_{2},...,\mathbf{x}_{1}\circ\mathbf{x}_{p},...,\mathbf{x}_{p-1}\circ\mathbf{x}_{p})^{T}$
as the $n\times\binom{p}{2}$ matrix that contains all the two-way
interaction terms, where $\circ$ denotes the point-wise product of
two vectors. 

 In this paper, our focus is on the fixed
design, i.e., the covariates are considered given. Our goal is to estimate the mean regression function by a linear combination of the covariates and interaction terms. 

\paragraph{Loss Function}

Denote $h(\cdot):\mathbb{R}^{(p^{2}+p)/2}\rightarrow\mathbb{R}$ as
the mean regression function, i.e., $h(\mathbf{z})=\mathbf{z}^{T}\beta$
for $\mathbf{z}\in\mathbb{R}^{(p^{2}+p)/2}$. Denote $\hat{h}(\mathbf{z})=\mathbf{z}^{T}\hat{\beta}$ as an estimated function of $h(\mathbf{z})$. In our fixed design setting, we focus on the prediction loss (or the Averaged Squared Error)
$\mathscr{\mathcal{\mathfrak{\mathsf{L}}}}(h,\hat{h}):=\frac{1}{n}||\mathbf{Z}\beta-\mathbf{Z}\hat{\beta}||_{2}^{2}$,
where $\left\Vert \cdot\right\Vert _{2}$ is the Euclidean norm. Set the index sets for the main
effects and the interaction effects as $\mathtt{I_{main}}=\{1,...,p\}$
and $\mathtt{I_{int}}=\{(i,j):1\leq i<j\leq p\}$ respectively. 

Let $\mathtt{I}=(\mathtt{I}_{1},\mathtt{I}_{2})\subset\mathtt{I_{main}}\varotimes\mathtt{I_{int}}$
($\varotimes$ is the Cartesian product) be the index set of a model
with $|\mathtt{I}_{1}|$ non-zero main effects and $|\mathtt{I}_{2}|$
non-zero interaction effects. In this paper, we consider the data
generating model (\ref{eq:0}) with at least two main effects and
one interaction effect purely for convenience, which does not affect the conclusions. Let $\mathbf{Z}_{\mathtt{I}}$
be the $n\times|\mathtt{I}|$ submatrix of $\mathbf{Z}$ that corresponds
to the model index $\mathtt{I}$. Its corresponding least squares
estimator $P_{\mathtt{I}}\mathbf{Y}$ is used to estimate $\mathbf{Z}\beta$,
where $P_{\mathtt{I}}$ is the projection matrix onto the column space
of $\mathbf{Z}_{\mathtt{I}}$. The loss function of using model $\mathtt{I}$
is denoted as $\mathcal{L}(\mathtt{I}):=\frac{1}{n}||P_{\mathtt{I}}\mathbf{Y}-\mathbf{Z}\beta||_{2}^{2}$. 

\paragraph{Heredity Conditions}

Denote the space of all the $p+\binom{p}{2}$-dimensional vectors
with a hierarchical notation of the subscripts as $$\ddot{\mathbb{R}}^{p}=\{\beta\in\mathbb{R}^{p+\binom{p}{2}}|\beta=(\beta_{1},...,\beta_{p},\beta_{1,2},...,\beta_{p-1,p})\}.$$
We refer to $\beta^{(1)}=(\beta_{1},...,\beta_{p})$
as the subvector consisting of the first $p$ elements in $\beta$, and $\beta^{(2)}=(\beta_{1,2},...,\beta_{p-1,p})$
as the subvector containing the rest of the elements. We introduce
the following two vector spaces: 

\[
\ddot{\mathbb{R}}_{weak}^{p}=\left\{ \beta\in\ddot{\mathbb{R}}^{p}|\mathbbm1_{\beta_{i,j}\neq0}\leq \mathbbm1_{\beta_{i}\neq0}\lor \mathbbm1_{\beta_{j}\neq0},1\leq i<j\leq p\right\}
\]
and
\[
\ddot{\mathbb{R}}_{strong}^{p}=\left\{ \beta\in\ddot{\mathbb{R}}^{p}|\mathbbm1_{\beta_{i,j}\neq0}\leq \mathbbm1_{\beta_{i}\neq0}\cdot \mathbbm1_{\beta_{j}\neq0},1\leq i<j\leq p\right\}.
\]
The space $\ddot{\mathbb{R}}_{strong}^{p}$ captures
the strong heredity condition that if the interaction term is in the
model, then both of its corresponding main effects should also be
included. The space $\ddot{\mathbb{R}}_{weak}^{p}$ characterizes the weak
heredity condition that if the interaction is in the model, then at
least one of its main effects should be included. As pointed out in
\citet{hao2016note}, the sign of the main effect coefficients are
not invariant of linear transformation of the covariates individually due to the existence of the
interaction terms. Heredity conditions are consequently meaningless
without the specification of the model parametrization. In our paper,
we stick to the parameterization ${\bf Z}$ and include the no heredity condition by considering the vector space $\ddot{\mathbb{R}}^{p}$. Define the $l_{0}$-norm of a vector $a=(a_{1},...,a_{p})$ as the
number of its non-zero elements, i.e., $\left\Vert a\right\Vert _{0}=\sum_{i=1}^{p}\mathbbm1_{a_{i}\neq0}$.
For a vector space $\mathcal{S}\in\left\{ \ddot{\mathbb{R}}_{strong}^{p},\ddot{\mathbb{R}}_{weak}^{p},\ddot{\mathbb{R}}^{p}\right\} $,
define the corresponding $l_{0}$-$ball$ and $l_{0}$-$hull$ of
$\mathcal{S}$ as 
\begin{equation}
B_{0}(r_{1},r_{2};\mathcal{S})=\left\{ \beta=(\beta^{(1)},\beta^{(2)})\in\mathcal{S},\left\Vert \beta^{(1)}\right\Vert _{0}\le r_{1},\left\Vert \beta^{(2)}\right\Vert _{0}\le r_{2}\right\}\label{eq:bound} 
\end{equation}
 and 
\[
\mathcal{F}_{0}(r_{1},r_{2};\mathcal{S})=\left\{ h:h(\mathbf{z})=\mathbf{z}^{T}\beta,\beta\in B_{0}(r_{1},r_{2};\mathcal{S})\right\} 
\]
respectively. Note that $B_{0}(r_{1},r_{2};\mathcal{S})$ represents
the collection of coefficients $\beta$ with at most $r_{1}$ non-zero
main effects and $r_{2}$ non-zero interaction effects under
a certain hierarchical constraint $\mathcal{S}$. And $\mathcal{F}_{0}(r_{1},r_{2};\mathcal{S})$
denotes the collection of linear combinations of the covariates with coefficients $\beta\in B_{0}(r_{1},r_{2};\mathcal{S})$.
Throughout this paper, we assume that $r_{1}+r_{2}\leq n$ (otherwise
the minimax risk may not converge or the rate may not be optimal), $r_1\geq2$ and $r_2\geq1$.

\paragraph{Minimax Risk}

It is helpful to consider the uniform performance of a modeling procedure when we have plentiful choices of modeling procedures during the analysis
of a statistical problem. The minimax framework seeks an estimator that minimizes the worst performance (in statistical risk) assuming that the truth belongs to a function class
$\mathcal{W}$. The minimax risk we consider is $$\min_{\hat{h}}\max_{h\in\mathcal{W}}E\mathsf{L}(\hat{h},h),$$ where $\hat{h}$ is over all estimators, and $\min$ and $\max$ may refer to $\inf$ and $\sup$, more formally speaking.
In our work, we assume that the true mean regression function has
a hierarchical structure by imposing $\mathcal{W}=\mathcal{F}_{0}(r_{1},r_{2};\mathcal{S})$,
with $\mathcal{S}\in\left\{ \ddot{\mathbb{R}}_{strong}^{p},\ddot{\mathbb{R}}_{weak}^{p},\ddot{\mathbb{R}}^{p}\right\} $. 

In this paper, we will use the notation $b_{n}\succeq a_{n}$
or $a_{n}\preceq b_{n}$ to represent $a_{n}=O(b_{n})$. If both $b_{n}\succeq a_{n}$ and $a_{n}\succeq b_{n}$ hold, we denote $a_{n}\asymp b_{n}$ to indicate that $a_{n}$ and $b_{n}$ are of the same order. If $a_{n}\succeq b_{n}$ holds without $a_{n}\asymp b_{n}$, we use the notation $a_{n}\succ b_{n}$ or $b_{n}\prec a_{n}$.

\section{\label{sec:Minimax-rate-of}Minimax Rate of Convergence under Strong
Heredity}
\subsection{Assumption}\label{subsec:assumption}
We start by stating an assumption required for our result of the minimax rate of convergence under strong heredity. In this paper, we use $p_{n}$ to indicate that the number of main
effects $p$ can go to infinity as $n$ increases. We also allow $r_{1}$ and
$r_{2}$ to increase with the sample size $n$ as well. 
\paragraph{Sparse Reisz Condition (SRC)}
For some $l_{1},l_{2}>0$, there exist constants $b_{1},b_{2}>0$
(not depending on $n$) such that for any $\beta=(\beta^{(1)},\beta^{(2)})$
with $\left\Vert \beta^{(1)}\right\Vert _{0}\le\min(2l_{1},\text{p}_{n})$
and $\left\Vert \beta^{(2)}\right\Vert _{0}\le\min\left(2l_{2},\binom{p_{n}}{2}\right)$,
we have
\begin{equation}
b_{1}\left\Vert \beta\right\Vert _{2}\leq\frac{1}{\sqrt{n}}\left\Vert \mathbf{Z}\beta\right\Vert _{2}\leq b_{2}\left\Vert \beta\right\Vert _{2}. \label{eq:src}
\end{equation}

The SRC assumption requires that the eigenvalues of  $\mathbf{Z}^T_\mathtt{I}\mathbf{Z}_\mathtt{I}$ for any sparse submatrix $\mathbf{Z}_\mathtt{I}$ of $\mathbf{Z}$ are bounded above and away from 0. It was
first proposed by \citet{zhang2008sparsity}. It is similar to the sparse eigenvalue conditions in \citet{zhang2010analysis,raskutti2011minimax},
quasi-isometry condition in \citet{Rigollet:2011fz}; it is also related
to the more stringent restricted isometry property (which requires
the constants $b_{1}$, $b_{2}$ are close to 1) in \citet{candes2007dantzig}.
Such assumptions are standard in the $l_{1}$-regularization analysis
like LASSO and the Dantzig selector. See \citet{bickel2009simultaneous,meinshausen2009lasso,van2007deterministic,koltchinskii2009dantzig}
for more references.

\subsection{Minimax Rate\label{subsec:Upper-bound}}
Now we present our main result of the minimax rate of convergence under strong heredity. A simple estimator is enough for an effective minimax upper bound. Let $\hat{\mathtt{I}}=\arg\min_{\mathtt{I}\in\mathtt{I}^{strong}_{r_1,r_2}}\sum_{i=1}^{n}(Y_{i}-\hat{Y}_{i}^{\mathtt{I}})^{2}$ be the model that minimizes the residual sum of squares over all the models that have exactly $r_1$ non-zero main effects and $r_2$ non-zero interaction effects under strong heredity, denoted as $\mathtt{I}^{strong}_{r_1,r_2}$, where $\hat{\mathbf{Y}}^{\mathtt{I}}=P_{\mathtt{I}}\mathbf{Y}$ is the projection of $\mathbf{Y}$ onto the column space of the design matrix $\mathbf{Z}_{\mathtt{I}}$. For lower bounding the minimax risk, the information-theoretical tool of using Fano's inequality with metric entropy understanding \citep{Yang:1999em} plays an important role in the proof. 

\begin{thm}
Under the Sparse Reisz Condition with $l_{1}=r_{1}\leq p_n\land n$,
$l_{2}=r_{2}\leq\binom{r_{1}}{2}\land n$ and the strong heredity condition
$\mathcal{W}=\mathcal{F}_{0}(r_{1},r_{2};\ddot{\mathbb{R}}_{strong}^{p_{n}})$, the minimax risk is upper bounded by 
\begin{equation}
\min_{\hat{h}}\max_{h\in\mathcal{W}}E\mathsf{L}(\hat{h},h)\leq{\sup_{h\in\mathcal{W}}}E(\mathcal{L}(\hat{\mathtt{I}}))\le\frac{c\sigma^{2}}{n}\left(r_{1}\left(1+\log\frac{p_{n}}{r_{1}}\right)+r_{2}\left(1+\log\frac{\binom{r_{1}}{2}}{r_{2}}\right)\right),
\label{thm:When-,-we1}\end{equation}
where $c$ is a pure constant; the minimax risk is lower bounded by 
\begin{equation}
\min_{\hat{h}}\max_{h\in\mathcal{W}}E\mathsf{L}(\hat{h},h)\geq c_{1}\frac{\sigma^{2}}{n}\left(r_{1}\left(1+\log\frac{p_{n}}{r_{1}}\right)\lor r_{2}\left(1+\log\frac{\binom{r_{1}}{2}}{r_{2}}\right)\right)
\label{thm:If-,-thenlower}
\end{equation}
for some positive constant $c_{1}$ that only depends on the constants
$b_{1}$ and \textbf{$b_{2}$} in the SRC assumption. 
\end{thm}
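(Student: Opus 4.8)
The plan is to prove the two displayed bounds separately. The upper bound \eqref{thm:When-,-we1} will follow from a standard oracle inequality for least squares selected by residual sum of squares over a finite list of models, once we observe that the data-generating function is exactly realizable within that list. The lower bound \eqref{thm:If-,-thenlower} is the maximum of two quantities, and we obtain it from two essentially independent applications of Fano's inequality \citep{Yang:1999em}: one that varies only the main effects (recovering the classical sparse-regression rate $\frac{\sigma^{2}}{n}r_{1}(1+\log(p_{n}/r_{1}))$) and one that keeps a fixed set of $r_{1}$ active main effects and varies the interactions among them (yielding $\frac{\sigma^{2}}{n}r_{2}(1+\log(\binom{r_{1}}{2}/r_{2}))$). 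Since $\mathcal{W}$ contains both subfamilies and every estimator is evaluated against all of $\mathcal{W}$, the minimax risk is bounded below by the larger of the two, which is of the order asserted in \eqref{thm:If-,-thenlower}.

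\emph{Upper bound.} The first inequality in \eqref{thm:When-,-we1} is immediate, as $\hat{\mathtt{I}}$ is one particular estimator. For the second, the key observation is that every $\beta\in B_{0}(r_{1},r_{2};\ddot{\mathbb{R}}_{strong}^{p_{n}})$ is exactly realizable within $\mathtt{I}^{strong}_{r_{1},r_{2}}$: writing $\operatorname{supp}(\beta)=(S_{1},S_{2})$ with $|S_{1}|\le r_{1}$, $|S_{2}|\le r_{2}$ and $S_{2}\subseteq\{(i,j):i,j\in S_{1}\}$ by strong heredity, the hypotheses $r_{1}\le p_{n}$ and $r_{2}\le\binom{r_{1}}{2}$ let us enlarge $S_{1}$ to a set $\tilde{S}_{1}$ of size exactly $r_{1}$ and pick $\tilde{S}_{2}\supseteq S_{2}$ of size exactly $r_{2}$ with $\tilde{S}_{2}\subseteq\binom{\tilde{S}_{1}}{2}$, producing a model $\tilde{\mathtt{I}}\in\mathtt{I}^{strong}_{r_{1},r_{2}}$ with $\operatorname{supp}(\beta)\subseteq\tilde{\mathtt{I}}$, hence zero approximation error. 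Then the defining property of $\hat{\mathtt{I}}$, the orthogonal decomposition $n\,\mathcal{L}(\hat{\mathtt{I}})=\|(I_{n}-P_{\hat{\mathtt{I}}})\mathbf{Z}\beta\|_{2}^{2}+\|P_{\hat{\mathtt{I}}}\epsilon\|_{2}^{2}$, and a union bound over the $|\mathtt{I}^{strong}_{r_{1},r_{2}}|$ candidate models to control $\sup_{\mathtt{I}}\|P_{\mathtt{I}}\epsilon\|_{2}^{2}$ and the Gaussian linear terms $\langle(I_{n}-P_{\mathtt{I}})\mathbf{Z}\beta,\epsilon\rangle$ give, for a pure constant $c$,
\[
\sup_{h\in\mathcal{W}}E\mathcal{L}(\hat{\mathtt{I}})\le\frac{c\sigma^{2}}{n}\bigl(r_{1}+r_{2}+\log|\mathtt{I}^{strong}_{r_{1},r_{2}}|\bigr).
\]
Since $|\mathtt{I}^{strong}_{r_{1},r_{2}}|\le\binom{p_{n}}{r_{1}}\binom{\binom{r_{1}}{2}}{r_{2}}$ and $\log\binom{m}{k}\le k\bigl(1+\log(m/k)\bigr)$, the right-hand side is at most a constant multiple of $\frac{\sigma^{2}}{n}\bigl(r_{1}(1+\log(p_{n}/r_{1}))+r_{2}(1+\log(\binom{r_{1}}{2}/r_{2}))\bigr)$, which is \eqref{thm:When-,-we1}.

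\emph{Lower bound.} It suffices to show $\max_{h\in\mathcal{W}}E\mathsf{L}(\hat{h},h)\gtrsim\frac{\sigma^{2}}{n}r_{1}(1+\log(p_{n}/r_{1}))$ and $\max_{h\in\mathcal{W}}E\mathsf{L}(\hat{h},h)\gtrsim\frac{\sigma^{2}}{n}r_{2}(1+\log(\binom{r_{1}}{2}/r_{2}))$, both with constants depending only on $b_{1},b_{2}$, since the larger of the two right-hand sides is of the order in \eqref{thm:If-,-thenlower}. For the first, restrict to the subclass $\{\beta\in\mathcal{W}:\beta^{(2)}=0\}$ (which lies in $\ddot{\mathbb{R}}_{strong}^{p_{n}}$) and use the Varshamov--Gilbert lemma to build $\{\beta_{1},\dots,\beta_{N}\}$ of $r_{1}$-sparse vectors with entries in $\{0,\delta\}$, $\log N\asymp r_{1}\log(p_{n}/r_{1})$, and $r_{1}\delta^{2}\lesssim\|\beta_{i}-\beta_{j}\|_{2}^{2}\lesssim r_{1}\delta^{2}$ for $i\ne j$. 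Since $\|(\beta_{i}-\beta_{j})^{(1)}\|_{0}\le 2r_{1}$ and $(\beta_{i}-\beta_{j})^{(2)}=0$, the SRC in \eqref{eq:src} applies with $l_{1}=r_{1}$ and gives $\mathsf{L}(h_{i},h_{j})=\frac1n\|\mathbf{Z}(\beta_{i}-\beta_{j})\|_{2}^{2}\asymp\|\beta_{i}-\beta_{j}\|_{2}^{2}\gtrsim r_{1}\delta^{2}$, while the Kullback--Leibler divergence between the corresponding Gaussian models equals $\frac{1}{2\sigma^{2}}\|\mathbf{Z}(\beta_{i}-\beta_{j})\|_{2}^{2}\le\frac{nb_{2}^{2}}{\sigma^{2}}r_{1}\delta^{2}$. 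Taking $\delta^{2}\asymp\frac{\sigma^{2}}{n}\log(p_{n}/r_{1})$ makes the maximal divergence a small fraction of $\log N$, so Fano's inequality \citep{Yang:1999em} yields $\max_{h}E\mathsf{L}(\hat{h},h)\gtrsim r_{1}\delta^{2}\asymp\frac{\sigma^{2}}{n}r_{1}\log(p_{n}/r_{1})$; the complementary parametric bound $\frac{\sigma^{2}r_{1}}{n}$ follows from the same argument with a dense $\{0,\delta'\}^{r_{1}}$-packing inside the fixed $r_{1}$-dimensional model supported on $\{1,\dots,r_{1}\}$ and $\delta'^{2}\asymp\frac{\sigma^{2}}{n}$, and the two combine (using $\max(1,x)\asymp 1+x$) into the first bound. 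The second bound is obtained by the same scheme after activating the main effects: in every hypothesis set $\beta_{1}=\dots=\beta_{r_{1}}=1$ (so all $\binom{r_{1}}{2}$ interactions among coordinates $1,\dots,r_{1}$ are eligible under strong heredity and $\|\beta^{(1)}\|_{0}=r_{1}$), hold these coordinates fixed, and let the packing vary a weight-$r_{2}$ subset of those $\binom{r_{1}}{2}$ interaction coordinates with entries in $\{0,\delta\}$; the common main-effect part cancels in every difference $\beta_{i}-\beta_{j}$, so the previous computation — with $K=\binom{r_{1}}{2}$ in place of $p_{n}$ and the SRC applied with $l_{2}=r_{2}$ — delivers $\frac{\sigma^{2}}{n}r_{2}(1+\log(\binom{r_{1}}{2}/r_{2}))$.

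The step I expect to be most delicate is the interaction lower bound, where one must verify that the ``activating'' main effects can be chosen so that every member of the packing genuinely lies in $B_{0}(r_{1},r_{2};\ddot{\mathbb{R}}_{strong}^{p_{n}})$ — in particular so that the strong-heredity indicator inequalities defining \eqref{eq:bound} hold and $\|\beta^{(1)}\|_{0}$ does not exceed $r_{1}$ — while contributing nothing to the pairwise losses and divergences, and one must check that the SRC in \eqref{eq:src} indeed covers the sparsity level $2r_{2}$ of the differences (it does, being stated with $l_{2}=r_{2}$), so that parameter distances translate cleanly into prediction losses. The rest — extracting the ``$+1$'' parametric terms, tracking that $c_{1}$ depends only on $b_{1},b_{2}$, and handling the degenerate regimes $p_{n}\asymp r_{1}$ and $\binom{r_{1}}{2}\asymp r_{2}$ where a logarithm vanishes and the parametric term dominates — is routine bookkeeping.
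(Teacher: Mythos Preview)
Your proposal is correct and follows essentially the same strategy as the paper: for the upper bound, the paper recasts the RSS-minimizer as an ABC estimator and invokes the oracle inequality of \citet{yang1999model} (which is precisely the union-bound argument you sketch), and for the lower bound it uses the same two subfamilies you describe---main effects only, and interactions varied under a fixed block of $r_{1}$ active main effects---together with the SRC to pass between $\ell_{2}$ and prediction distance. The only cosmetic differences are that the paper packages the Fano step through the local-metric-entropy machinery of \citet{Yang:1999em} and builds its packings with $\{-1,0,1\}$ entries via a direct counting lemma (Lemma~\ref{lem:The-packing-}), and it extracts the ``$+1$'' by first reducing (via monotonicity of the minimax risk) to $r_{1}\le p_{n}/2$, $r_{2}\le\binom{r_{1}}{2}/2$ rather than running a separate dense parametric packing as you do.
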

From the theorem, under the SRC and the strong heredity condition, the minimax rate of convergence scales as: $\min_{\hat{h}}\max_{h\in\mathcal{W}}E\mathsf{L}(\hat{h},h)\asymp \frac{\sigma^{2}}{n}(r_{1}(1+\log\frac{p_{n}}{r_{1}})\lor r_{2}(1+\log(\binom{r_{1}}{2}/r_{2})))$.
\begin{rem}
The term $r_{1}(1+\log(p_{n}/r_{1}))/n=\frac{r_{1}}{n}+\frac{r_1}{n}\log(p_{n}/r_{1})$ reflects two aspects in the estimation of the main effects: the price of searching among $\binom{p_n}{r_1}$ possible models, which is of order $r_{1}\log(p_{n}/r_{1})/n$, and the price of estimating
the $r_1$ main effect coefficients after the search. Thus $r_1(1+\log(p_{n}/r_{1}))/n$ is {\it the total price of estimating the main effects}. Similarly, $r_{2}\left(1+\log\left(\binom{r_{1}}{2}/r_{2}\right)\right)/n$ is {\it the total price of estimating the interaction effects}.
\end{rem}

\begin{rem}
Our result of the upper bound is general and does not require the sparsity condition
of $r_{1}\prec p_{n}$, although it may be needed for fast rate of convergence.
\end{rem}

\section{Minimax Rate of Convergence under Weak Heredity and No Heredity\label{sec:Minimax-rate-of-1}}

Similar results are obtained under weak heredity and no
heredity. The minimax rate of convergence is still determined by the maximum
of the total price of estimating the main effects and that of the interaction effects. When the heredity condition changes, the
total price of estimating the interaction effects may differ, possibly substantially.
\begin{thm}
\label{thm:Under-the-Sparse3}Under the Sparse Reisz Condition with
$l_{1}=r_{1}\leq p_{n}\land n$, $l_{2}=r_{2}\leq(r_{1}p_{n})\land n$ and the weak heredity condition $\mathcal{W}=\mathcal{F}_{0}(r_{1},r_{2};\ddot{\mathbb{R}}_{weak}^{p_{n}})$,
the minimax risk is of order 
\begin{equation}
\min_{\hat{h}}\max_{h\in\mathcal{W}}E\mathsf{L}(\hat{h},h)\asymp\frac{\sigma^{2}}{n}\left(r_{1}\left(1+\log\frac{p_{n}}{r_{1}}\right)\lor r_{2}\left(1+\log\frac{r_{1}\cdot p_{n}}{r_{2}}\right)\right).\label{eq:2}
\end{equation}
\end{thm}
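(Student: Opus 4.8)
The plan is to run the same two-sided argument as for the strong-heredity case, the only structural change being that under weak heredity a model with $r_1$ active main effects may use any of the $K:=r_1\left(p_n-(r_1+1)/2\right)\asymp r_1p_n$ interaction terms involving at least one active main effect, instead of only the $\binom{r_1}{2}$ terms allowed under strong heredity; replacing $\binom{r_1}{2}$ by $K$ throughout should produce \eqref{eq:2}. For the upper bound I would take $\hat{\mathtt{I}}=\arg\min_{\mathtt{I}\in\mathtt{I}^{weak}_{r_1,r_2}}\sum_{i=1}^{n}(Y_i-\hat{Y}_i^{\mathtt{I}})^2$, the residual-sum-of-squares minimizer over all models with exactly $r_1$ main and $r_2$ interaction effects consistent with weak heredity (we may assume $r_2\le K$, since weak heredity with $r_1$ main effects admits at most $K$ interactions). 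For any $h=\mathbf{z}^T\beta\in\mathcal{W}$, after padding its support there is a model in $\mathtt{I}^{weak}_{r_1,r_2}$ whose column space contains $\mathbf{Z}\beta$, so that model has zero approximation error. Since $|\mathtt{I}^{weak}_{r_1,r_2}|\le\binom{p_n}{r_1}\binom{K}{r_2}$,
\[
\log|\mathtt{I}^{weak}_{r_1,r_2}|\le r_1\log\frac{ep_n}{r_1}+r_2\log\frac{eK}{r_2}\preceq r_1\left(1+\log\frac{p_n}{r_1}\right)+r_2\left(1+\log\frac{r_1p_n}{r_2}\right),
\]
while every competing model has dimension $r_1+r_2\le n$. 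Invoking the least-squares model-selection risk bound already used for the previous theorem (which bounds $E\mathcal{L}(\hat{\mathtt{I}})$ by a constant times $\tfrac{\sigma^2}{n}$ times the sum of the model dimension and the log-cardinality) and then $a+b\le 2(a\lor b)$ gives the upper half of \eqref{eq:2}; as in Remark 2 this needs no condition $r_1\prec p_n$.

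\textbf{Lower bound.} I would exhibit two packing subsets of $\mathcal{W}=\mathcal{F}_0(r_1,r_2;\ddot{\mathbb{R}}^{p_n}_{weak})$, one for each term in the maximum, and apply Fano's inequality with metric-entropy control as in \citet{Yang:1999em}. For the first term, set all interaction coefficients to zero (which is vacuously weak-hereditary) and let $\beta^{(1)}$ range over an $l_0$-packing of the $r_1$-sparse vectors in $\mathbb{R}^{p_n}$ obtained by combining a Varshamov--Gilbert support packing with a magnitude packing on a fixed support; the SRC with $l_1=r_1$ makes both the pairwise prediction-loss separations and the pairwise Kullback--Leibler divergences of order $\tfrac{\sigma^2}{n}r_1(1+\log(p_n/r_1))$, and Fano delivers the term $\tfrac{\sigma^2}{n}r_1(1+\log(p_n/r_1))$. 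For the second term, fix the first $r_1$ main-effect coefficients at a common small nonzero value and all others at zero; then every interaction $(i,j)$ with $\min(i,j)\le r_1$ is admissible and there are exactly $K=r_1(p_n-(r_1+1)/2)\asymp r_1p_n$ of them, so letting $\beta^{(2)}$ run over an $l_0$-packing of the $r_2$-sparse vectors supported on these $K$ coordinates yields, via the SRC with $l_2=r_2$, a packing with separations of order $\tfrac{\sigma^2}{n}r_2(1+\log(r_1p_n/r_2))$ and matching divergence control, hence the second term by Fano. Taking the larger of the two Fano bounds gives the lower half of \eqref{eq:2}, with a constant depending only on $b_1,b_2$.

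\textbf{Main obstacle.} The least-squares risk bound and the Fano apparatus are already developed for the strong-heredity theorem, so the genuinely new work is combinatorial and structural: establishing that the number of weak-heredity-admissible interactions is $\asymp r_1p_n$ uniformly in the active main-effect set (so that $\tfrac12 r_1p_n\le K\le r_1p_n$ for all $2\le r_1\le p_n$, which is what lets $r_1p_n$ replace $K$ in the rate), and, for the lower bound, verifying that the interaction packing keeps its members inside $\mathcal{F}_0(r_1,r_2;\ddot{\mathbb{R}}^{p_n}_{weak})$ while retaining cardinality $\exp\!\left(\Theta\!\left(r_2\log(r_1p_n/r_2)\right)\right)$. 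The other delicate point is the degenerate regime $r_2\asymp r_1p_n$ (feasible only when $r_1p_n\le n$), where the interaction support can no longer be varied and the ``$+1$'' pure-estimation part of the rate must be recovered from the magnitude packing alone, exactly as in the main-effects-only minimax lower bound.
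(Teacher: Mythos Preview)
Your proposal is correct and follows essentially the same route as the paper: the upper bound uses the residual-sum-of-squares minimizer over $\mathtt{I}^{weak}_{r_1,r_2}$ with complexity $\log\binom{p_n}{r_1}+\log\binom{K}{r_2}$ where $K=r_1(p_n-(r_1+1)/2)$, and the lower bound uses the same two packing constructions (main effects only in $\mathcal{H}_2$; fixed main effects with interactions varying over the $K$ weak-heredity-admissible positions in $\mathcal{H}_1$) fed into the Yang--Barron local-entropy/Fano machinery. The only cosmetic difference is in handling the boundary regime $r_2\asymp r_1p_n$: the paper disposes of it by monotonicity of the minimax risk in the function class (reducing to $r_2\le K/2$), whereas you propose recovering the ``$+1$'' via a magnitude packing on a fixed support---both work, and the monotonicity reduction is a bit cleaner.
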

\begin{thm}
\label{thm:Under-the-Sparse4}Under the Sparse Reisz Condition with $l_{1}=r_{1}\leq p_{n}\land n$, $l_{2}=r_{2}\leq\binom{p_{n}}{2}\land n$ and the no heredity condition $\mathcal{W}=\mathcal{F}_{0}(r_{1},r_{2};\ddot{\mathbb{R}}^{p_{n}})$,
the minimax risk is of order 
\begin{equation}
\min_{\hat{h}}\max_{h\in\mathcal{W}}E\mathsf{L}(\hat{h},h)\asymp\frac{\sigma^{2}}{n}\left(r_{1}\left(1+\log\frac{p_{n}}{r_{1}}\right)\lor r_{2}\left(1+\log\frac{\binom{p_{n}}{2}}{r_{2}}\right)\right).\label{eq:3}
\end{equation}
\end{thm}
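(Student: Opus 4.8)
\emph{Plan.} The proof parallels that of Theorem~1; the only structural change is that the count $\binom{r_1}{2}$ of eligible interaction terms is replaced by the total count $\binom{p_n}{2}$ of two-way interactions, because under no heredity there is no restriction on which interactions may be active. I will establish matching upper and lower bounds of the advertised order.

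\emph{Upper bound.} Let $\mathtt{I}^{none}_{r_1,r_2}$ denote the family of all models with exactly $r_1$ nonzero main effects and exactly $r_2$ nonzero interaction effects (no heredity imposed), and let $\hat{\mathtt{I}}=\arg\min_{\mathtt{I}\in\mathtt{I}^{none}_{r_1,r_2}}\sum_{i=1}^n(Y_i-\hat Y_i^{\mathtt{I}})^2$. This family has cardinality $\binom{p_n}{r_1}\binom{\binom{p_n}{2}}{r_2}$, hence $\log|\mathtt{I}^{none}_{r_1,r_2}|\le r_1(1+\log(p_n/r_1))+r_2(1+\log(\binom{p_n}{2}/r_2))$. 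Since $\mathcal{W}=\mathcal{F}_0(r_1,r_2;\ddot{\mathbb{R}}^{p_n})$, the true mean function is realized by some $\mathtt{I}^\star\in\mathtt{I}^{none}_{r_1,r_2}$ — if the true support is strictly smaller, pad it arbitrarily to size $(r_1,r_2)$, which is harmless because the SRC with $l_1=r_1,\ l_2=r_2$ forces every such submatrix to have full column rank, and the no-heredity constraint is preserved under any padding. Feeding this into the same least-squares model-selection risk bound used in the proof of Theorem~1 — which under the SRC controls $E\mathcal{L}(\hat{\mathtt{I}})$ by $\frac{c\sigma^2}{n}(r_1+r_2+\log|\mathtt{I}^{none}_{r_1,r_2}|)$ — together with $a+b\asymp a\lor b$, yields the claimed upper bound.

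\emph{Lower bound.} It suffices to lower bound the minimax risk by each of the two terms separately. For the term $\frac{\sigma^2}{n}r_1(1+\log(p_n/r_1))$, restrict the parameter set by fixing $\beta^{(2)}$ at a single admissible value (zero is admissible under no heredity) and letting $\beta^{(1)}$ range over $r_1$-sparse vectors in $\mathbb{R}^{p_n}$; this is the classical sparse main-effects problem, and a Varshamov--Gilbert packing of $r_1$-sparse supports with magnitudes of order $\sqrt{\sigma^2 r_1\log(p_n/r_1)/n}$, combined with Fano's inequality and the metric-entropy comparison of \citet{Yang:1999em} (the loss being equivalent to the squared $\ell_2$-distance of coefficients via the SRC constants $b_1,b_2$), gives this bound. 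For the term $\frac{\sigma^2}{n}r_2(1+\log(\binom{p_n}{2}/r_2))$, fix $\beta^{(1)}$ at $r_1\ (\ge2)$ coordinates carrying small nonzero entries and let the interaction support range over \emph{all} $r_2$-subsets of the $\binom{p_n}{2}$ interactions — this is where no heredity makes the construction the easiest of the three conditions, since no main effect needs to be switched on to accompany an active interaction. A Varshamov--Gilbert construction over interaction supports (and, within each support, signs) produces a well-separated family of cardinality $\exp(c'r_2\log(\binom{p_n}{2}/r_2))$; choosing the common coefficient magnitude proportional to $\sqrt{\sigma^2 r_2\log(\binom{p_n}{2}/r_2)/n}$ makes the SRC upper constant $b_2$ bound the pairwise Kullback--Leibler divergences while the SRC lower constant $b_1$ keeps the prediction losses separated, and Fano's inequality delivers the bound. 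Taking the maximum of the two finishes the lower bound, and together with the upper bound gives the stated rate.

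\emph{Main obstacle.} As in Theorem~1, the delicate step is the joint calibration in the interaction lower bound: the magnitude must be small enough that the aggregate KL divergence over the packing is a small fraction of $\log(\mathrm{cardinality})$ — which is where $b_2$ and the standing condition $r_1+r_2\le n$ enter — yet large enough that, after multiplication by $b_1^2$, the pairwise losses still dominate the target rate. One must also check that superposing the fixed main-effect block on the varying interaction block does not spoil the SRC-type control; this is exactly why the SRC is invoked with $l_1=r_1$ and $l_2=r_2$, i.e.\ at twice the active sparsity, so that differences of two hypotheses remain in the admissible regime. Beyond this, the argument is a routine transcription of the Theorem~1 proof with $\binom{r_1}{2}$ replaced by $\binom{p_n}{2}$.
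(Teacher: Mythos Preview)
Your proposal is correct and follows essentially the same route as the paper: the upper bound via least squares over all $(r_1,r_2)$-models with complexity $\log\binom{p_n}{r_1}+\log\binom{\binom{p_n}{2}}{r_2}$ matches the paper exactly, and your two-case lower bound (main effects only; fixed main block with interactions ranging over all of $\binom{p_n}{2}$) is precisely the paper's $\mathcal{H}_2$ and $\mathcal{H}_1$ constructions for the no-heredity class. The only cosmetic difference is that you phrase the lower bound as a direct Varshamov--Gilbert/Fano argument with an explicit magnitude choice, whereas the paper packages the same calculation through the local-metric-entropy formulation of \citet{Yang:1999em}; the underlying packing, the SRC-based KL/loss comparison via $b_1,b_2$, and the resulting rate are identical.
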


\section{Comparisons and Insights\label{sec:Comparisons-and-Insights5}}

In this section, we summarize the consequences of our main results in three
scenarios for an integrated understanding. For brevity, we introduce the following notation.
For $a,b\in\mathbb{N}^{+}$ and $a\geq b$, define the quantity $\xi_{b}^{a}:=b(1+\log(a/b)).$
The total price of estimating the main effects and the interaction effects are then denoted as $\sigma^{2}\xi_{r_{1}}^{p_{n}}/n$ and $\sigma^{2}\xi_{r_{2}}^{K}/n$ respectively,
where $K$ depends on $p_{n}$, $r_{1}$ and the heredity condition. We also use the notation $K_{\mathcal{S}}$ (\ref{eq:ks}) to indicate that $K$ depends on the heredity condition $\mathcal{S}$. 
Let 
\[
\mathcal{M}(\mathcal{S}):=\min_{\hat{h}}\underset{h\in\mathcal{F}_{0}(r_{1},r_{2};\mathcal{S})}{\max E\mathsf{L}(\hat{h},h)}
\]
denote the minimax risk under the heredity condition $\mathcal{S}$.

\subsection{Detailed Rates of Convergence}
Since the minimax rate of convergence depends on the maximum of
$\xi_{r_{1}}^{p_{n}}$ and $\xi_{r_{2}}^{K}$, we discuss the cases 
where one of the two quantities is greater than the other. 
\subsubsection*{Scenario 1: $\boldsymbol{r_{2}\preceq r_{1}}$}\label{subsec:details}

When there are more main effects than interaction effects in the sense that $r_{2}\preceq r_{1}$, the minimax
rate of convergence is not affected by the heredity conditions. When $\log (p_{n}/r_{1})\succeq\log r_{1}$, we always have $\xi_{r_{1}}^{p_{n}}\succeq\xi_{r_{2}}^{K}$ regardless of the heredity conditions. When $\log (p_{n}/r_{1})\prec\log r_{1}$, it depends on the order of $r_2$ to further decide which estimation price is larger. When $\log (p_{n}/{r_{1}})\prec\log r_{1}$, let $r_{*}$ be such that $\xi_{r_{1}}^{p_{n}}\asymp\xi_{r_{*}}^{r_{1}^{2}}$. If $r_{*}\succeq r_{2}$, we have $\xi_{r_{1}}^{p_{n}}\succeq\xi_{r_{2}}^{K}$; otherwise $\xi_{r_{1}}^{p_{n}}\prec\xi_{r_{2}}^{K}$. 

In summary, given that $r_{2}\preceq r_{1}$, the minimax risk is of order 
\[
\mathcal{M}(\mathcal{S})\asymp\begin{cases}
\frac{\sigma^{2}}{n}\xi_{r_{2}}^{r_{1}^{2}}, & \textrm{if }r_{*}\preceq r_{2}\preceq r_1\textrm{ and }\log\frac{p_{n}}{r_{1}}\prec\log r_{1},\\
\frac{\sigma^{2}}{n}\xi_{r_{1}}^{p_{n}}, & \textrm{otherwise,}
\end{cases}
\]
for $\mathcal{S}\in\left\{ \ddot{\mathbb{R}}_{strong}^{p},\ddot{\mathbb{R}}_{weak}^{p},\ddot{\mathbb{R}}^{p}\right\}$. 
\begin{rem}
This scenario also includes the special case
when \textbf{$p_{n}=O(1)$}, where we must have $r_{1}=O(1)$ and
$r_{2}=O(1)$. The minimax rate of convergence is of the standard parametric
order $1/n$ regardless of the heredity conditions.
\end{rem}

\paragraph{Scenario 2:\textmd{ $\boldsymbol{r_{1}\preceq r_{2}}$} and $\boldsymbol{\log p_{n}\preceq r_{1}}$}
When there exist more
interaction terms, i.e., $r_{1}\preceq r_{2}$, under weak or no heredity, the quantity $\xi_{r_{2}}^{K}$
is always no less than (in order) $\xi_{r_{1}}^{p_{n}}$. 

For strong heredity, we discuss case by case. When $\log (p_n/r_1)\prec \log r_1$, we always have $\xi_{r_{1}}^{p_{n}}\preceq\xi_{r_{2}}^{r_1^2}$. When $\log (p_n/r_1)\succeq \log r_1$, it depends on the order of $r_2$ to decide which estimation price is larger in terms of order.  When $\log (p_{n}/{r_{1}})\succeq\log r_{1}$, let $r^{\prime}_{*}$ be such that $\xi_{r_{1}}^{p_{n}}\asymp\xi_{r^{\prime}_{*}}^{r_{1}^{2}}$. If $r_{2}\succeq r^{\prime}_{*}$, we have $\xi_{r_{1}}^{p_{n}}\preceq\xi_{r_{2}}^{r_1^2}$; otherwise $\xi_{r_{1}}^{p_{n}}\succ\xi_{r_{2}}^{r_1^2}$. In summary, given that  $r_{1}\preceq r_{2}$ and $\log p_n\preceq r_1$, the minimax risk is of order
\[
\mathcal{M}(\ddot{\mathbb{R}}_{strong}^{p_{n}})\asymp\begin{cases}
\frac{\sigma^{2}}{n}\xi_{r_{1}}^{p_{n}}, & \textrm{if } r_{1}\preceq r_{2}\preceq r^{\prime}_{*}\textrm{ and }\log\frac{p_{n}}{r_{1}}\succeq\log r_{1},\\
\frac{\sigma^{2}}{n}\xi_{(r_{2}\land r_{1}^{2})}^{r_{1}^{2}}, & \textrm{otherwise,}
\end{cases}
\]

\[
\mathcal{M}(\ddot{\mathbb{R}}_{weak}^{p_{n}})\asymp\frac{\sigma^{2}}{n}\xi_{(r_{2}\land r_{1}p_{n})}^{r_{1}p_{n}},
\]

\[
\mathcal{M}(\ddot{\mathbb{R}}^{p_{n}})\asymp\frac{\sigma^{2}}{n}\xi_{(r_{2})}^{p_{n}^{2}}.
\]

\begin{rem}
The term $\xi_{(r_{2}\land K)}^{K}$ deals with the case where $r_{2}$ is inactive in the sense that $r_{2}$ exceeds $K$ under the specific heredity condition.
For example, with $r_{2}\geq \binom{r_{1}}{{2}}$, the upper bound $r_2$ in (\ref{eq:bound}) does not provide any new information of the number of non-zero interaction effects for strong heredity. Thus the $l_{0}$-ball $B_{0}(r_{1},r_{2};\ddot{\mathbb{R}}_{strong}^{p})$
is automatically reduced to a subset $B_{0}(r_{1},\binom{r_{1}}{2};\ddot{\mathbb{R}}_{strong}^{p})$. 
\end{rem}

\paragraph{Scenario 3: $\boldsymbol{r_{1}\preceq r_{2}}$ and $\boldsymbol{\log p_{n}\succeq r_{1}}$}

When the number of the main effects $p_{n}$ is at least exponentially
as many as the non-zero main effects in the sense that $\log p_{n}\succeq r_{1}$, $\xi_{r_{1}}^{p_{n}}$
is always no less than $\xi_{r_{2}}^{K}$ in terms of order. In fact, in this scenario, the results
of the minimax rates under weak or no heredity are exactly the same as those
in Scenario 2. For completeness, we still present the results. Specifically,
the minimax risk is of order

\[
\mathcal{M}(\ddot{\mathbb{R}}_{strong}^{p_{n}})\asymp \frac{\sigma^{2}}{n}\xi_{r_{1}}^{p_{n}},
\]

\[
\mathcal{M}(\ddot{\mathbb{R}}_{weak}^{p_{n}})\asymp\frac{\sigma^{2}}{n}\xi_{(r_{2}\land r_{1}p_{n})}^{r_{1}p_{n}},
\]

\[
\mathcal{M}(\ddot{\mathbb{R}}^{p_{n}})\asymp\frac{\sigma^{2}}{n}\xi_{r_{2}}^{p_{n}^{2}}.
\]

\subsection{Interesting Implications}\label{subsec:implication}
\begin{enumerate}
\item Comparing the results for weak heredity and no heredity, we may or may not have distinct rates of convergence. When there exists a small constant $c>0$ such that $\log r_{2}\leq(1-c)\cdot\log(r_{1}p_{n})$ for large enough $n$, there is no difference
between weak heredity and no heredity from the perspective of rate of convergence in estimation. It still remains an open question how they are different for the problem of model identification. Without the above relationship between $r_1$ and $r_2$, there is no guarantee that the rates of convergence are the same under weak heredity and no heredity. For example, when $r_{2}= r_{1}p_{n}/\log r_{1}$, if in addition we have $r_1=p_n\leq n^{1/2}$, 
the minimax rates are the same under weak and no heredity,
at $\mathcal{M}(\ddot{\mathbb{R}}_{weak}^{p_{n}})\asymp \mathcal{M}(\ddot{\mathbb{R}}^{p_{n}})\asymp r_{1}p_{n}\log\log r_{1}/(n\log r_{1})$. In contrast, if instead we have $r_{1}= \sqrt{p_{n}}$, then the minimax rates are different, with $\mathcal{M}(\ddot{\mathbb{R}}_{weak}^{p_{n}})\asymp r_{1}p_{n}\log\log r_{1}/(n\log r_{1})$ and $\mathcal{M}(\ddot{\mathbb{R}}^{p_{n}})\asymp r_{1}p_{n}/n$.

\item Heredity conditions do not affect the rates of convergence in some situations.
For example, when there exist more main effects than interaction effects (Scenario 1), the minimax rates of convergence are the same
under all three heredity conditions. 
\item From the detailed rates of convergence, under any of the three heredity conditions, the estimation of the interaction terms $\xi^{K}_{r_2}/n$ may become the dominating part. There are two different reasons why the price of estimating the interaction terms becomes higher than that for the main effect terms. One is that the number of interaction terms is more than that of the main effect terms. The other reason is that although the main effect terms outnumber the interaction terms, the ambient dimension is so large that even estimating a small number of the interaction terms is more challenging than estimating the main effects.
\item How much can the rate of convergence be improved by imposing strong
heredity? We quantify this improvement by taking the ratio of two
minimax rates of convergence given the ambient dimension $p_n$, i.e., $\mathcal{M}(\ddot{\mathbb{R}}_{strong}^{p_{n}})/\mathcal{M}(\ddot{\mathbb{R}}_{weak}^{p_{n}})$
and $\mathcal{M}(\ddot{\mathbb{R}}_{strong}^{p_{n}})/\mathcal{M}(\ddot{\mathbb{R}}^{p_{n}})$. In Scenario 2 ($r_{1}\preceq r_{2}$ and
$\log p_{n}\preceq r_{1}$), we have $\mathcal{M}(\ddot{\mathbb{R}}_{strong}^{p_{n}})/\mathcal{M}(\ddot{\mathbb{R}}_{weak}^{p_{n}})\succeq \log p_{n}/p_{n}$,
where the maximal improvement happens when $r_{1}\asymp \log p_{n}$
and $r_{2}\asymp r_{1}p_{n}$. That is, the minimax rate of convergence
under strong heredity is up to $\log p_{n}/p_{n}$ times faster than that
under weak heredity. Similarly we have $\mathcal{M}(\ddot{\mathbb{R}}_{strong}^{p_{n}})/\mathcal{M}(\ddot{\mathbb{R}}^{p_{n}})\succeq \log^{2}p_{n}/p_{n}^{2}$,
where the maximal improvement $\log^{2}p_{n}/p_{n}^{2}$ happens at
$r_{1}\asymp \log p_{n}$ and $r_{2}\asymp p_{n}^{2}$. 
\item In Scenario 3 ($r_{1}\preceq r_{2}$ and $\log p_{n}\succeq r_{1}$),
the improvement $\mathcal{M}(\ddot{\mathbb{R}}_{strong}^{p_{n}})/\mathcal{M}(\ddot{\mathbb{R}}_{weak}^{p_{n}})\succeq\log p_{n}/p_{n}$,
where the maximal improvement happens when $r_{2}\succeq r_{1}p_{n}$.
In this scenario, the maximal improvement of the minimax rate from weak
heredity to strong heredity depends on the ambient dimension
$p_{n}$. In other words, the larger the ambient dimension is, the
more improvement of minimax rate of convergence we have from weak
heredity to strong heredity. Similarly we have $\mathcal{M}(\ddot{\mathbb{R}}_{strong}^{p_{n}})/\mathcal{M}(\ddot{\mathbb{R}}^{p_{n}})\succeq\log p_{n}/p_{n}^{2}$,
where the equality holds if $r_{1}=O(1)$ and $r_{2}\asymp p_{n}^{2}$.
\item If $r_{2}$ is active for all three heredity conditions, i.e., $r_2\leq \binom{r_1}{2}$, the maximal improvement of minimax rate from weak/no heredity to strong heredity turns out to be consistent. That is, $\mathcal{M}(\ddot{\mathbb{R}}_{strong}^{p_{n}})/\mathcal{M}(\ddot{\mathbb{R}}_{weak}^{p_{n}})\asymp\mathcal{M}(\ddot{\mathbb{R}}_{strong}^{p_{n}})/\mathcal{M}(\ddot{\mathbb{R}}^{p_{n}})\succeq1/\log p_{n}$,
where the maximal improvement happens at $r_{1}\asymp \log p_{n}$
and $r_{2}\asymp r_{1}^{2}$.
\end{enumerate}

\section{Extension to Quadratic Models\label{sec:extension}}

Our aforementioned results do not consider quadratic effects. When
both quadratic and two-way interaction effects are included in a model (called a quadratic model), it is
easy to see the rates of convergence in the theorems still apply under
both strong heredity and weak heredity. However, in the case of no
heredity, the number of quadratic terms enters into the minimax rate. Assume one model has at most $r_{3}$ extra non-zero quadratic terms. We need
the following assumption. 

\paragraph{Sparse Reisz Condition 2 (SRC2)}

For some $l_{1},l_{2},l_{3}>0$, there exist constants $b_{1},b_{2}>0$
(not depending on $n$) such that for any $\beta=(\beta^{(1)},\beta^{(2)},\beta^{(3)})$
with $\left\Vert \beta^{(1)}\right\Vert _{0}\le\min(2l_{1},\text{p}_{n})$,
$\left\Vert \beta^{(2)}\right\Vert _{0}\le\min(2l_{2},\binom{p_{n}}{2})$
and $\left\Vert \beta^{(3)}\right\Vert _{0}\le\min(2l_{3},p_{n})$,
we have
\[
b_{1}\left\Vert \beta\right\Vert _{2}\leq\frac{1}{\sqrt{n}}\left\Vert \mathbf{Z}^*\beta\right\Vert _{2}\leq b_{2}\left\Vert \beta\right\Vert _{2},
\]where $\mathbf{Z}^*=(\mathbf{X},[\mathbf{XX}],\mathbf{X^2})$ is the new design matrix, with $\mathbf{X^2}$ representing the $n\times p$ matrix that contains all the quadratic terms.

Next we state the minimax results for quadratic models. Strong heredity
and weak heredity are exactly the same condition since a quadratic term has only one corresponding main effect term. That is, both strong and
weak heredity require that if a quadratic term $X_{1}^{2}$ has a
non-zero coefficient, then $X_{1}$ must also have a non-zero coefficient.
Similarly, under SRC2 with $l_{1}=r_{1},l_{2}=r_{2},l_{3}=r_{3}$, the minimax rate of convergence under strong/weak
heredity for the quadratic model stays the order 
\begin{equation}
\frac{\sigma^{2}}{n}\left(r_{1}(1+\log\frac{p_{n}}{r_{1}})\lor r_{2}(1+\log\frac{\binom{r_{1}}{2}}{r_{2}})\right);\label{eq:interactionstrong}
\end{equation}
under no heredity, its order becomes

\begin{equation}
\frac{\sigma^{2}}{n}\left(\bar{r}(1+\log\frac{p_{n}}{\bar{r}})\lor r_{2}(1+\log\frac{\binom{p_{n}}{2}}{r_{2}})\right),\label{eq:interactionno}
\end{equation}
where $\bar{r}=r_{1}\lor r_{3}$.

\section{Adaptation to Heredity Conditions and Sparsity Indices\label{sec:Conclusion-and-Discussion}}

In the previous sections, we have determined the minimax rates of convergence for estimating the linear regression function with interactions under different sizes of sparsity indices $r_1,r_2$ and heredity conditions $\mathcal{S}$. These results assume that $r_1$, $r_2$ and $\mathcal{S}$ are known. However, in practice, we usually have no prior information about the underlying heredity condition nor the sparsity constraints. Thus it is necessary and appealing to build an estimator that adaptively achieves the minimax rate of convergence without the knowledge of $\mathcal{S}$, $r_1$ and $r_2$. We construct such an adaptive estimator as below.

To achieve our goal, we consider one specific model and three types of models together as the candidate models: $$\bar{\mathcal{F}}= \{\mathtt{I}_{p_{n},(p_{n}^{2}-p_{n})/2}\}\cup\{\mathtt{I}_{k_{1},k_{2}}^{strong}\}\cup\{\mathtt{I}_{k_{1},k_{2}}^{weak}\}\cup\{\mathtt{I}_{k_{1},k_{2}}^{no}\},$$ where $\mathtt{I}_{p_{n},(p_{n}^{2}-p_{n})/2}$ denotes the full model with $p_n$ main effects and all the $\binom{p_n}{2}$ interaction effects. It is included so that the risk of our estimator will not be worse than order $R_{\mathbf{Z}}/n$, in which $R_{\mathbf{Z}}$ is the rank of the full design matrix. With a slight abuse of the notation, we use
$\mathtt{I}_{k_{1},k_{2}}^{strong}$, $\mathtt{I}_{k_{1},k_{2}}^{weak}$
and $\mathtt{I}_{k_{1},k_{2}}^{no}$ to represent a model with $k_{1}$
main effects and $k_{2}$ interaction effects under strong
heredity, weak heredity and no heredity respectively. Note that some models appear more than once in $\bar{\mathcal{F}}$, which does not cause any problem for the goal of estimating
the regression function. The details of the range of $k_1$ and $k_2$ for each model class are shown in (\ref{eq:cstrong}), (\ref{eq:cweak}) and (\ref{eq:cno}).

To choose a model from the candidate set, we apply the ABC criterion in \citet{yang1999model}. For a model $\mathtt{I}$ in $\bar{\mathcal{F}}$, the criterion value is
\begin{equation}
ABC(\mathtt{I})=\sum_{i=1}^{n}(Y_{i}-\hat{Y}_{i}^{\mathtt{I}})^{2}+2r_{\mathtt{I}}\sigma^{2}+\lambda\sigma^{2}C_{\mathtt{I}},\label{eq:1}
\end{equation}
where $\hat{\mathbf{Y}}^{\mathtt{I}}=P_{\mathtt{I}}\mathbf{Y}$ is
the projection of $\mathbf{Y}$ onto the column space of the design
matrix $\mathbf{Z}_{\mathtt{I}}$ with rank $r_{\mathtt{I}}$, $C_{\mathtt{I}}$ is the descriptive complexity
of model $\mathtt{I}$ and $\lambda>0$ is a constant. The model descriptive
complexity satisfies $C_{\mathtt{I}}>0$ and $\sum_{\mathtt{I}\in\bar{\mathcal{F}}}\exp(-C_{\mathtt{I}})\leq1$. 

The model descriptive complexity is crucial in building the adaptive model. Let $\pi_0, \pi_1,\pi_2,\pi_3\in(0,1)$ be four constants such that $\pi_0+\pi_1+\pi_2+\pi_3=1$. Set $C_{\mathtt{I}_{p_{n},(p_{n}^{2}-p_{n})/2}}=-\log \pi_0$ for the full model,
\begin{equation}
C_{\mathtt{I}_{k_{1},k_{2}}^{strong}}=-\log \pi_1+\log(p_{n}\land n)+\log\left(\binom{k_{1}}{2}\land n\right)+\log\binom{p_{n}}{k_{1}}+\log\binom{\binom{k_{1}}{2}}{k_{2}}\label{eq:cstrong}
\end{equation}
for $1\leq k_{1}\le p_{n}\land n\textrm{ and }0\leq k_{2}\le \binom{k_{1}}{2}\land n $,

\begin{equation}
 C_{\mathtt{I}_{k_{1},k_{2}}^{weak}}=-\log \pi_2+\log(p_{n}\land n)+\log\left(K\land n\right)+\log\binom{p_{n}}{k_{1}}+\log\binom{K}{k_{2}}\label{eq:cweak}
\end{equation}
with $K=k_{1}p_{n}-\binom{k_{1}}{2}-k_{1}$ for $1\leq k_{1}\leq p_{n}\land n\textrm{ and }0\leq k_{2}\leq K\land n$, and
\begin{equation}
C_{\mathtt{I}_{k_{1},k_{2}}^{no}}=-\log \pi_3+\log(p_{n}\land n)+\log\left(\binom{p_{n}}{2}\land n\right)+\log\binom{p_{n}}{k_{1}}+\log\binom{\binom{p_{n}}{2}}{k_{2}}\label{eq:cno},
\end{equation}
for $1\leq k_{1}\leq p_{n}\land n\textrm{ and }0\leq k_{2}\leq\binom{p_{n}}{2}\land n$. This complexity assignment recognizes that there are three types of models under the different heredity conditions.

Let $\hat{\mathtt{I}}=\arg\min_{\mathtt{I}\in\bar{\mathcal{F}}}ABC(\mathtt{I})$
denote the model that minimizes the ABC criterion over
the candidate model set $\bar{\mathcal{F}}$ and $\hat{\mathbf{Y}}^{\hat{\mathtt{I}}}:=P_{\hat{\mathtt{I}}}\mathbf{Y}$ denote
the least squares estimate of $\mathbf{Y}$ using the model $\hat{\mathtt{I}}$. Then we have the following oracle inequality. 
\begin{thm}\label{thm:7}
When $\lambda\ge5.1/\log2$, the worst risk
of the ABC estimator $\hat{\mathbf{Y}}^{\hat{\mathtt{I}}}$ is upper bounded by
\[
\underset{h\in\mathcal{F}_{0}(r_{1},r_{2};\mathcal{S})}{\sup E(\mathcal{L}(\hat{\mathtt{I}}))}\le\frac{c\sigma^{2}}{n}\left[R_{\mathbf{Z}}\land\left(r_{1}\left(1+\log\frac{p_{n}}{r_{1}}\right)+r_{2}\left(1+\log\frac{K_{\mathcal{S}}}{r_{2}}\right)\right)\right],
\]
with 
\begin{equation}
K_{\mathcal{S}}=\begin{cases}
\binom{r_{1}}{2}, & \textrm{if }\mathcal{S}=\ddot{\mathbb{R}}_{strong}^{p},\\
r_{1}p_{n}, & \textrm{if }\mathcal{S}=\ddot{\mathbb{R}}_{weak}^{p},\\
\binom{p_{n}}{2}, & \textrm{if }\mathcal{S}=\ddot{\mathbb{R}}^{p},
\end{cases}\label{eq:ks}
\end{equation}
where $R_{\mathbf{Z}}$ is the rank of the full design matrix $\mathbf{Z}$ and the constant $c$ only depends on the constant $\lambda$. 
\end{thm}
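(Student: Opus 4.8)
The plan is to deduce the theorem from the risk bound for the ABC model selection criterion (\ref{eq:1}) established by \citet{yang1999model}. For a finite candidate list $\bar{\mathcal{F}}$ whose descriptive complexities satisfy the Kraft inequality $\sum_{\mathtt{I}\in\bar{\mathcal{F}}}e^{-C_{\mathtt{I}}}\le1$ and for any $\lambda\ge5.1/\log2$, that bound states that for every true mean function $h(\mathbf{z})=\mathbf{z}^{T}\beta$,
\[
E\bigl(\mathcal{L}(\hat{\mathtt{I}})\bigr)\le c\inf_{\mathtt{I}\in\bar{\mathcal{F}}}\Bigl(\tfrac1n\bigl\|(I-P_{\mathtt{I}})\mathbf{Z}\beta\bigr\|_2^2+\tfrac{\sigma^2}{n}\bigl(r_{\mathtt{I}}+C_{\mathtt{I}}\bigr)\Bigr),
\]
with $c$ depending only on $\lambda$; taking the supremum over $h\in\mathcal{F}_{0}(r_{1},r_{2};\mathcal{S})$ reduces the theorem to bounding this right-hand side uniformly over that class. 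Two ingredients remain: (i) checking that the complexities in (\ref{eq:cstrong})--(\ref{eq:cno}), together with $C_{\mathtt{I}_{p_{n},(p_{n}^{2}-p_{n})/2}}=-\log\pi_{0}$, indeed satisfy Kraft's inequality; and (ii) exhibiting, for each $h$ in the class, a single model in $\bar{\mathcal{F}}$ with zero approximation error whose rank-plus-complexity is of the stated order. Note that, in contrast to Theorems 1--4, this argument does not invoke the SRC: only models with exactly zero bias against the truth are ever used, and $r_{\mathtt{I}}=\mathrm{rank}(\mathbf{Z}_{\mathtt{I}})$ is automatically at most the number of columns of $\mathbf{Z}_{\mathtt{I}}$, so no control of small eigenvalues enters.

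For (i), split $\sum_{\mathtt{I}\in\bar{\mathcal{F}}}e^{-C_{\mathtt{I}}}$ into its four groups. The full model contributes $\pi_{0}$. For the strong-heredity group,
\[
\sum_{k_{1},k_{2}}e^{-C_{\mathtt{I}_{k_{1},k_{2}}^{strong}}}=\pi_{1}\sum_{k_{1}}\frac{1}{(p_{n}\land n)\binom{p_{n}}{k_{1}}}\sum_{k_{2}}\frac{1}{\bigl(\binom{k_{1}}{2}\land n\bigr)\binom{\binom{k_{1}}{2}}{k_{2}}},
\]
and since $\sum_{j\ge0}\binom{m}{j}^{-1}$ is bounded by an absolute constant for every $m$ while each of $k_{1},k_{2}$ ranges over at most $(\,\cdot\land n)+1$ values, both the inner and outer sums are bounded by absolute constants; the weak- and no-heredity groups are handled in the same way, with $k_{1}p_{n}-\binom{k_{1}}{2}-k_{1}$ (the number of interaction pairs touching the chosen main effects) and $\binom{p_{n}}{2}$ replacing $\binom{k_{1}}{2}$. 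The point is that the $\log(\,\cdot\land n)$ weights in the complexities are exactly what make the sums over $(k_{1},k_{2})$ converge, and a direct computation confirms $\sum_{\mathtt{I}\in\bar{\mathcal{F}}}e^{-C_{\mathtt{I}}}\le1$ (any residual slack being absorbed into the $\pi_{i}$, which only affects the final constant).

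For (ii), fix $h$ with coefficient $\beta\in B_{0}(r_{1},r_{2};\mathcal{S})$ and put $r_{1}^{\prime}=\|\beta^{(1)}\|_{0}\le r_{1}$, $r_{2}^{\prime}=\|\beta^{(2)}\|_{0}\le r_{2}$. Let $\mathtt{I}^{*}$ be the member of the group of $\bar{\mathcal{F}}$ matching $\mathcal{S}$ whose index set is exactly $\mathrm{supp}(\beta)$; it is an admissible candidate precisely because $\beta$ satisfies the $\mathcal{S}$-heredity constraint, and it has $k_{1}=r_{1}^{\prime}$, $k_{2}=r_{2}^{\prime}$ with $r_{2}^{\prime}\le K\le K_{\mathcal{S}}$, where $K$ is the number of eligible interaction pairs given the chosen main effects. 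Since every column of $\mathbf{Z}_{\mathtt{I}^{*}}$ carrying a nonzero coefficient of $\beta$ is present, $\mathbf{Z}\beta$ lies in the column space of $\mathbf{Z}_{\mathtt{I}^{*}}$, so $(I-P_{\mathtt{I}^{*}})\mathbf{Z}\beta=0$, and $r_{\mathtt{I}^{*}}\le r_{1}^{\prime}+r_{2}^{\prime}\le r_{1}+r_{2}$. Using $\log\binom{a}{b}\le b(1+\log(a/b))$ and the monotonicity of $x\mapsto x(1+\log(a/x))$ in both arguments, the two binomial terms of $C_{\mathtt{I}^{*}}$ obey $\log\binom{p_{n}}{r_{1}^{\prime}}\le r_{1}\bigl(1+\log(p_{n}/r_{1})\bigr)$ and $\log\binom{K}{r_{2}^{\prime}}\le(r_{2}\land K_{\mathcal{S}})\bigl(1+\log(K_{\mathcal{S}}/(r_{2}\land K_{\mathcal{S}}))\bigr)$, while the remaining pieces $-\log\pi_{\cdot}$, $\log(p_{n}\land n)$ and $\log(\binom{k_{1}}{2}\land n)$ (or its weak/no analogue) are dominated by these because $r_{1}\ge2$ and $r_{2}\ge1$. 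Feeding $\mathtt{I}^{*}$ into the oracle inequality gives the claimed bound without the leading $R_{\mathbf{Z}}\land$; feeding in instead the full model $\mathtt{I}_{p_{n},(p_{n}^{2}-p_{n})/2}$ (again zero bias, rank $R_{\mathbf{Z}}$, complexity $-\log\pi_{0}=O(1)$) gives $c\sigma^{2}R_{\mathbf{Z}}/n$; the smaller of the two candidate bounds is the asserted $R_{\mathbf{Z}}\land(\cdots)$.

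The main obstacle is bookkeeping rather than anything conceptual: verifying Kraft's inequality with the precise $\log$-weights of (\ref{eq:cstrong})--(\ref{eq:cno}), and checking that the lower-order terms in $C_{\mathtt{I}^{*}}$ are genuinely absorbed by the leading sparsity terms over all admissible $(r_{1},r_{2},p_{n},n)$---in particular in the regime where $r_{2}$ is inactive ($r_{2}>K_{\mathcal{S}}$, so the effective interaction sparsity is $r_{2}\land K_{\mathcal{S}}$, cf.\ the remark following Scenario 2) and where $p_{n}\land n$ is small---takes some care with constants. Granting the risk bound of \citet{yang1999model}, these two verifications close the proof.
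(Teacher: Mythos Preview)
Your approach is the same as the paper's: invoke the oracle inequality of \citet{yang1999model} for the ABC criterion, then evaluate it at the true-support model within the appropriate heredity group (zero bias, rank $\le r_1+r_2$, complexity of the stated order) and at the full model (zero bias, rank $R_{\mathbf{Z}}$, complexity $-\log\pi_0$), and take the smaller. The paper actually does not carry out your step (i) at all---it simply asserts the Kraft requirement in the setup---so your verification is an addition rather than a departure. One bookkeeping slip there: the displayed quantity $\sum_{k_{1},k_{2}}e^{-C_{\mathtt{I}_{k_{1},k_{2}}^{strong}}}$ is not the Kraft sum over $\bar{\mathcal{F}}$, since for each pair $(k_{1},k_{2})$ there are $\binom{p_{n}}{k_{1}}\binom{\binom{k_{1}}{2}}{k_{2}}$ distinct models all carrying the same complexity; once that multiplicity is included, the binomial factors cancel and the strong-heredity contribution collapses to $\pi_{1}\sum_{k_{1}}\sum_{k_{2}}\bigl[(p_{n}\land n)(\binom{k_{1}}{2}\land n)\bigr]^{-1}$, which is bounded by a constant multiple of $\pi_{1}$ precisely because the ranges of $k_{1}$ and $k_{2}$ match the $\log(\,\cdot\land n)$ weights. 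Your remark about $\sum_{j}\binom{m}{j}^{-1}$ is then unnecessary, and your parenthetical about absorbing slack into the $\pi_{i}$ is exactly right.
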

From the theorem, without any prior knowledge of the sparsity indices, the constructed ABC estimator adaptively achieves the minimax upper bound regardless of the heredity conditions. The result also indicates a major difference between estimation and model identification. For estimation, from the result, we are able to achieve adaptation with respect to the heredity condition without any additional assumption. For model identification, although we are not aware of any work that addresses the task of adaptation over the unknown heredity nature, it seems certain that much stronger assumptions than those for consistency under an individual heredity condition will be necessary to achieve adaptive selection consistency. Achieving adaptive model selection consistency under different types of conditions remains an important open problem on model selection theory and methodology.
\begin{rem}
We do not require any assumptions on the relationship among the variables for the upper bound in the theorem. In particular, the variables may be arbitrary correlated.
\end{rem}

\begin{rem}
The order $R_{\mathbf{Z}}/n$ is achievable when
we use the projection estimator from the full model. Thus the minimax rate of convergence
is no slower than the order $R_{\mathbf{Z}}/n$. As is known, the
rank of the design matrix plays an important role in determining the
minimax rate of convergence under fixed design \citep{yang1999model,Rigollet:2011fz,wang2014adaptive}.
For our result, when $p_{n}$, $r_{1}$ and $r_{2}$ together make the total estimation price of the true model
small enough, the upper bound will be improved from $R_{\mathbf{Z}}/n$
to $(r_{1}(1+\log(p_{n}/r_{1}))\lor r_{2}(1+\log(\binom{r_{1}}{2}/r_{2})))/n$.
\end{rem}

\begin{rem}
The ABC estimator may not be practical when $p_n$ is large. In such case, stochastic search instead of all subset selection can be used for implementation.
\end{rem}
\begin{rem}
The term $``R_{\mathbf{Z}}\land"$ automatically applies to the lower bound under whichever heredity condition, since under the SRC assumption, it intrinsically requires that $r_{1}(1+\log(p_{n}/r_{1}))\lor r_{2}(1+\log(\binom{r_{1}}{2}/r_{2}))$ is no larger than $R_{\mathbf{Z}}$ in terms of order. Otherwise, the lower bound $(r_{1}(1+\log(p_{n}/r_{1}))\lor r_{2}(1+\log(\binom{r_{1}}{2}/r_{2})))/n$
by our proof will exceed the upper bound $R_{\mathbf{Z}}/n$, which
leads to a contradiction. We give a specific example in Appendix \ref{subsec:An-example-when}
to illustrate this requirement.
\end{rem}

\appendix

\section{Proof of the upper bound in Theorem 3.1}
\subsection*{Proof of (\ref{thm:When-,-we1})}
Recall that $h(\mathbf{z})=\mathbf{z}^{T}\beta$ and $\hat{h}(\mathbf{z})=\mathbf{z}^{T}\hat{\beta}$. Set $\mathbf{h}_{\mathtt{I}}:=P_{\mathtt{I}}\mathbf{h}$ as the estimator by model $\mathtt{I}$, where we use the bold-face $\mathbf{h}=(h(\mathbf{z}_{1}^{T}),...,h(\mathbf{z}_{n}^{T}))^{T}$
to denote the mean regression function vector and $\mathbf{z}_{i}$
is the $i$-th row of the full design matrix $\mathbf{Z}$. 
We first prove that $\hat{\mathtt{I}}$ is equivalently an ABC estimator over the candidate set we consider.
The SRC assumption with $l_1=r_1$, $l_2=r_2$ assures that $r_1+r_2\leq n$. It follows that, for any model $\mathtt{I}=(\mathtt{I}_1,\mathtt{I}_2)$ with $|\mathtt{I}_1|_0=r_1$, $|\mathtt{I}_2|_0=r_2$, the corresponding submatrix $\mathbf{Z}_{\mathtt{I}}$ is full rank, i.e., $r_{\mathtt{I}}=r_1+r_2$. Thus,  
\begin{eqnarray*}
\hat{\mathtt{I}} & = & \arg\min_{\mathtt{I}\in\mathcal{F}}\sum_{i=1}^{n}(Y_{i}-\hat{Y}_{i}^{\mathtt{I}})^{2}\\
 & = & \arg\min_{\mathtt{I}\in\mathcal{F}}\sum_{i=1}^{n}(Y_{i}-\hat{Y}_{i}^{\mathtt{I}})^{2}+2r_{\mathtt{I}}\sigma^{2}+\lambda\sigma^{2}C_{\mathtt{I}}\\
 & = & \arg\min_{\mathtt{I}\in\mathcal{F}}ABC(\mathtt{I}),
\end{eqnarray*}
where $\mathcal{F}$ is the collection of models that have $r_1$ non-zero main effects and $r_2$ non-zero interaction effects with $0\leq r_1\leq p_n, 1\leq r_2\leq \binom{r_1}{2}$, and all the models in $\mathcal{F}$ share the same model descriptive complexity $$C_{\mathtt{I}_{r_{1},r_{2}}^{strong}}=\log\binom{p_{n}}{r_{1}}+\log\binom{\binom{r_{1}}{2}}{r_{2}}.
$$ The ABC criterion and the model descriptive complexity are introduced near (\ref{eq:1}). Therefore, $\hat{\mathtt{I}}$ is an ABC estimator over the candidate set $\mathcal{F}$.

Next we prove the upper bound. Since $\hat{\mathtt{I}}$ is an ABC estimator over the candidate set $\mathcal{F}$, by Theorem 1 in \citet{yang1999model}, we have:
\begin{equation}
E(\mathcal{L}(\hat{\mathtt{I}}))\leq c\underset{\mathtt{I}\in\mathcal{F}}{\inf}\left(\frac{1}{n}\left\Vert \mathbf{h}_{\mathtt{I}}-\mathbf{h}\right\Vert _{2}^{2}+\frac{\sigma^{2}r_{\mathtt{I}}}{n}+\frac{\lambda\sigma^{2}C_{\mathtt{I}}}{n}\right),\label{eq:yang}
\end{equation}
where $c$ is a positive constant that depends on the constant $\lambda$
only.  When $h\in\mathcal{W}=\mathcal{F}_{0}(r_{1},r_{2};\ddot{\mathbb{R}}_{strong}^{p_{n}})$,
there exists a specific model in $\mathcal{F}$ such that the projection estimator of this model is equal to $\mathbf{h}$. We consider the RHS of (\ref{eq:yang}) evaluated
at such a model, where we still denote it as $\mathtt{I}_{r_{1},r_{2}}$ for convenience. Thus,
\begin{eqnarray*}
E(\mathcal{L}(\hat{\mathtt{I}})) & \leq & c\left(\left\Vert \mathbf{h}_{\mathtt{I}_{r_{1},r_{2}}}-\mathbf{h}\right\Vert ^{2}+\frac{\sigma^{2}r_{\mathtt{I}_{r_{1},r_{2}}}}{n}+\frac{\lambda\sigma^{2}C_{\mathtt{I}_{r_{1},r_{2}}}}{n}\right)\\
 & = & \underset{(i)}{\underbrace{\frac{c}{n}\left(\sigma^{2}r_{\mathtt{I}_{r_{1},r_{2}}}+\lambda\sigma^{2}C_{\mathtt{I}_{r_{1},r_{2}}}\right)}}.
\end{eqnarray*}
The term $(i)$ is bounded as follows:
\begin{eqnarray*}
(i) & \leq & \frac{c_{1}\lambda}{n}\sigma^{2}\left(\frac{1}{\lambda}\left(r_{1}+r_{2}\right)+\log\binom{p_{n}}{r_{1}}+\log\binom{\binom{r_{1}}{2}}{r_{2}}\right)\\
 & \leq & \frac{c_{1}\lambda}{n}\sigma^{2}\left(\frac{1}{\lambda}\left(r_{1}+r_{2}\right)+r_{1}\left(1+\log\frac{p_{n}}{r_{1}}\right)+r_{2}\left(1+\log\frac{\binom{r_{1}}{2}}{r_{2}}\right)\right)\\
 & \leq & \frac{c_{2}}{n}\sigma^{2}\left(r_{1}\left(1+\log\frac{p_{n}}{r_{1}}\right)+r_{2}\left(1+\log\frac{\binom{r_{1}}{2}}{r_{2}}\right)\right).
\end{eqnarray*}
Therefore,
\[
 E(\mathcal{L}(\hat{\mathtt{I}}))\leq\frac{c_{2}\cdot\sigma^{2}}{n}\left(r_{1}\left(1+\log\frac{p_{n}}{r_{1}}\right)+r_{2}\left(1+\log\frac{\binom{r_{1}}{2}}{r_{2}}\right)\right).
\]
Thus we have
\[
 \min_{\hat{h}}\max_{h\in\mathcal{W}}E\mathsf{L}(\hat{h},h)\leq \max_{h\in\mathcal{W}}E(\mathcal{L}(\hat{\mathtt{I}}))\leq\frac{c_{2}\cdot\sigma^{2}}{n}\left(r_{1}\left(1+\log\frac{p_{n}}{r_{1}}\right)+r_{2}\left(1+\log\frac{\binom{r_{1}}{2}}{r_{2}}\right)\right),
\]
where the above $c_{1}$, $c_{2}$ are universal constants.

\section{Proof of the lower bound in Theorem 3.1}

Before stating the proof of (\ref{thm:If-,-thenlower}), we
introduce the local metric entropy, two important sets that aid the understanding of the metric entropy of the regression function space, together with the lemmas in relation to these two sets. 

\subsection{Metric Entropy}

Metric entropy plays a central role in minimax theory, through the concepts of packing and covering. It provides
a way to understand the ``cardinality'' of a set with infinitely
many elements. In deriving the lower bound, information theoretic techniques play
a key role, such as the local metric entropy, Fano's inequality, Shannon's mutual information and
Kullback\textendash Leibler divergence. We begin by introducing the definition of
the local metric entropy.

\begin{defn}[Local Metric Entropy]
 Given a metric space $(\mathcal{X},\rho)$, let $B(x,\epsilon)=\{x^{\prime}\in\mathcal{X}|\rho(x,x^{\prime})\leq\epsilon\}$
be a $\epsilon$-ball around $x$. For $0<a<1$, the \textbf{$a$-local
$\epsilon$-entropy at $x$}, denoted as $\log M_{x}^{a}\left(\epsilon;\mathcal{X},\rho\right)$,
is defined as the $a\epsilon$-packing entropy of $B(x,\epsilon)$.
The \textbf{$a$-local $\epsilon$-entropy}, denoted as $\log M_{\mathtt{local}}^{a}\left(\epsilon;\mathcal{X},\rho\right)$,
is then defined as the maximum (or supremum if maximum does not exist) of $\log M_{x}^{a}(\epsilon;\mathcal{X},\rho)$
over all $x$ in $\mathcal{X}$, i.e., $\log M_{\mathtt{local}}^{a}\left(\epsilon;\mathcal{X},\rho\right)=\underset{x\in\mathcal{X}}{\max}\log M_{x}^{a}\left(\epsilon;\mathcal{X},\rho\right)$.
\end{defn}

\subsection{Important Subsets}

Set the Hamming distance between any two vectors $v,v^{\prime}\in\mathbb{R}^{d}$
as $\rho_{H}(v,v^{\prime})=\sum_{i=1}^{d}\mathbbm1_{v_{i}\neq v_{i}^{\prime}}$.
Consider the set
\[
\mathcal{H}=\left\{ \beta\in\ddot{\mathbb{R}}_{strong}^{p_{n}}:\beta\in\{-1,0,1\}^{p_{n}+\binom{p_{n}}{2}},\left\Vert \beta^{(1)}\right\Vert _{0}\leq r_{1},\left\Vert \beta^{(2)}\right\Vert _{0}\leq r_{2}\right\}
\]
and let $\mathcal{H}_{1}$ denote a subset of $\mathcal{H}$ where the
the first $r_{1}$ coordinates are fixed, i.e.,
\[
\mathcal{H}_{1}=\left\{ \beta\in\mathcal{H}:\beta^{(1)}=(\underset{r_{1}}{\underbrace{1,...,1}},\underset{p_{n}-r_{1}}{\underbrace{0,...,0}}),\left\Vert \beta^{(2)}\right\Vert _{0}=r_{2}\right\} .
\]
Let $\mathcal{H}_{2}$ denote another subset of $\mathcal{H}$ where
no interaction effect exists, i.e.,

\[
\mathcal{H}_{2}=\left\{ \beta\in\mathcal{H}:\left\Vert \beta^{(1)}\right\Vert _{0}=r_{1},\left\Vert \beta^{(2)}\right\Vert _{0}=0\right\} ,
\]

The following two lemmas of the metric entropy of the subsets $\mathcal{H}_{1}$
and $\mathcal{H}_{2}$ are needed in the proof of (\ref{thm:If-,-thenlower}).
\begin{lem}
\label{lem:The-packing-}If $r_{2}\leq\frac{2}{3}\binom{r_{1}}{2}$,
then there exists a subset of $\mathcal{H}_{1}$ with its cardinality
no less than $\exp\left(\frac{r_{2}}{2}\log\frac{\binom{r_{1}}{2}-r_{2}/2}{r_{2}}\right)$
such that the pairwise Hamming distance of the points in this subset
is greater than $r_{2}/2$.
\end{lem}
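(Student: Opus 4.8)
The plan is to construct the desired subset of $\mathcal{H}_1$ by a Gilbert–Varshamov type argument applied to the "interaction part" $\beta^{(2)}$ only, since on $\mathcal{H}_1$ the main-effect part $\beta^{(1)}$ is frozen to $(1,\dots,1,0,\dots,0)$. Because $\beta^{(1)}$ has its first $r_1$ coordinates equal to $1$, the strong heredity constraint $\mathbbm1_{\beta_{i,j}\neq 0}\le \mathbbm1_{\beta_i\neq 0}\cdot\mathbbm1_{\beta_j\neq 0}$ is automatically satisfied for every interaction index $(i,j)$ with $1\le i<j\le r_1$, and forces $\beta_{i,j}=0$ otherwise. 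Hence identifying $\mathcal{H}_1$ with the set of vectors $\gamma\in\{-1,0,1\}^{N}$ with $N=\binom{r_1}{2}$ eligible coordinates and $\|\gamma\|_0=r_2$, the problem reduces to a purely combinatorial packing problem: find many such sparse sign vectors that are pairwise far apart in Hamming distance. First I would set up this reduction precisely, noting $\rho_H(\beta,\beta')=\rho_H(\gamma,\gamma')$ on $\mathcal{H}_1$.

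Next I would carry out a counting/volume argument. Fix one such vector $\gamma$; the number of vectors $\gamma'\in\{-1,0,1\}^{N}$ with $\|\gamma'\|_0=r_2$ and $\rho_H(\gamma,\gamma')\le r_2/2$ is at most the total number of $\gamma'$ within Hamming ball of radius $r_2/2$, which can be bounded by a sum $\sum_{t\le r_2/2}\binom{N}{t}2^{t}\cdot(\text{choices to modify }\gamma)$ — more cleanly, I would bound it by the number of ways to pick which coordinates of the support of $\gamma'$ lie outside the support of $\gamma$ together with their signs. The total number of admissible vectors is $\binom{N}{r_2}2^{r_2}$. A greedy (maximal packing) construction then yields a packing set of size at least
\[
\frac{\binom{N}{r_2}2^{r_2}}{\#\{\gamma':\rho_H(\gamma,\gamma')\le r_2/2\}}.
\]
The arithmetic I would then push through is to show this ratio is at least $\exp\!\big(\tfrac{r_2}{2}\log\tfrac{N-r_2/2}{r_2}\big)$, using standard binomial estimates such as $\binom{N}{r_2}\ge (N/r_2)^{r_2}$ and, for the denominator, $\sum_{t\le r_2/2}\binom{N}{t}\binom{N-t}{r_2-t}2^{r_2}\le (\text{something like } \binom{N}{r_2/2}\binom{N}{r_2/2}2^{r_2}\cdot r_2)$, simplified via $\binom{N}{r_2/2}\le (2eN/r_2)^{r_2/2}$. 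The hypothesis $r_2\le \tfrac{2}{3}\binom{r_1}{2}=\tfrac{2}{3}N$ is exactly what I expect to need to make the ratio $\tfrac{N-r_2/2}{r_2}$ exceed $1$ and to control the binomial bounds (it guarantees $r_2/2\le N/3$, so $N-t\ge N-r_2/2\ge \tfrac{2}{3}N$ throughout the ball, keeping the denominator terms uniformly comparable).

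The main obstacle I anticipate is the bookkeeping in bounding the Hamming ball size sharply enough: a naive bound $\sum_{t\le r_2/2}\binom{N}{t}2^t$ ignores that $\gamma'$ must itself be exactly $r_2$-sparse, and being too lossy there would degrade the $\log\frac{N-r_2/2}{r_2}$ factor to something like $\log\frac{N}{r_2}$ minus a constant, or worse, kill a constant fraction of the exponent. The clean way is to parametrize $\gamma'$ by $(S,\,\sigma)$ where $S=\mathrm{supp}(\gamma')$ and $\sigma\in\{\pm1\}^{S}$, split $S$ into $S\cap\mathrm{supp}(\gamma)$ and its complement, and observe $\rho_H(\gamma,\gamma')\le r_2/2$ forces $|S\setminus\mathrm{supp}(\gamma)|\le r_2/2$ and hence at most $\binom{N}{r_2/2}$ choices for that part, at most $\binom{r_2}{r_2/2}\le 2^{r_2}$ for the rest of $S$, and $2^{r_2}$ for the signs; multiplying and comparing to $\binom{N}{r_2}2^{r_2}$ gives the bound after the $2^{r_2}$ and $\binom{r_2}{r_2/2}$ factors are absorbed using $\binom{N}{r_2}/\binom{N}{r_2/2}\ge (\tfrac{N-r_2}{r_2})^{r_2/2}$. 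Once that estimate is in hand, the stated cardinality follows by a one-line simplification, and I would close by translating the packing set back to $\mathcal{H}_1$ via the reduction established at the start.
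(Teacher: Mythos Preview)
Your overall strategy---reduce to the interaction block, then run a Gilbert--Varshamov greedy packing with a volume bound on the Hamming ball---is exactly the paper's. The difference, and the place your sketch falls short of the stated constant, is the Hamming-ball estimate.

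Your parametrization of $\gamma'$ by $(S',\sigma)$ splits into: a choice of $S'\setminus\mathrm{supp}(\gamma)$ (at most $\binom{N}{r_2/2}$), a choice of $S'\cap\mathrm{supp}(\gamma)$ (at most $\binom{r_2}{r_2/2}\le 2^{r_2}$), and full signs $2^{r_2}$. Dividing into $|\mathcal{H}_1|=\binom{N}{r_2}2^{r_2}$ leaves you with
\[
\frac{\binom{N}{r_2}}{\binom{N}{r_2/2}\,2^{r_2}}\;\ge\;\left(\frac{N-r_2/2}{4r_2}\right)^{r_2/2},
\]
i.e.\ an extra factor of $4$ inside the logarithm. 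That proves a perfectly usable packing lemma (and suffices for the downstream minimax lower bound, where only the order matters), but it does not yield the lemma \emph{as stated}; the surplus $2^{r_2}$ does not get absorbed by the binomial ratio.

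The paper's fix is to be deliberately \emph{less} careful: it drops the constraint $\|\gamma'\|_0=r_2$ and simply bounds the ball by ``choose $r_2/2$ coordinates that are allowed to differ, and let each take any value in $\{-1,0,1\}$,'' giving
\[
|\mathcal{A}|\le\binom{N}{r_2/2}\,3^{r_2/2}.
\]
Since $3^{r_2/2}<2^{r_2}$, the greedy count gives a packing of size at least $m=\binom{N}{r_2}/\binom{N}{r_2/2}$ with no leftover factor, and the elementary inequality $\binom{A}{B}/\binom{A}{B/2}\ge\big((A-B/2)/B\big)^{B/2}$ (valid for $B\le \tfrac{2}{3}A$) gives exactly the claimed cardinality. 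So the missing idea in your sketch is this cruder-but-sharper ball bound; your more structured bookkeeping paradoxically costs you the constant.
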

\begin{proof}
The proof is presented in Appendix \ref{lemma1}.
\end{proof}
\begin{lem}
\label{lem:If-,-then2}If $r_{1}\leq2p_{n}/3$, then there exists
a subset of $\mathcal{H}_{2}$ with its cardinality no less than $\exp\left(\frac{r_{1}}{2}\log\frac{p_{n}-r_{1}/2}{r_{1}}\right)$
such that the pairwise Hamming distance of the points in this subset
is greater than $r_{1}/2$. 
\end{lem}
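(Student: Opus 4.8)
The plan is to reduce the statement to a classical combinatorial fact about packings in the Hamming cube, namely a Gilbert--Varshamov-type bound. Observe that a vector $\beta\in\mathcal{H}_{2}$ is entirely determined by its support set $S\subset\{1,\dots,p_{n}\}$ with $|S|=r_{1}$ together with a choice of sign $\pm1$ on each coordinate of $S$; since we only need to control the Hamming distance $\rho_{H}$, which counts coordinates where two vectors disagree, it suffices to work just with the supports and fix, say, all nonzero entries equal to $1$ (the signs only help, never hurt, the separation). So the problem becomes: show that among the $\binom{p_{n}}{r_{1}}$ subsets of $\{1,\dots,p_{n}\}$ of size $r_{1}$ there is a sub-collection of size at least $\exp\!\big(\tfrac{r_{1}}{2}\log\tfrac{p_{n}-r_{1}/2}{r_{1}}\big)$ whose pairwise symmetric differences all exceed $r_{1}/2$.

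The key steps, in order, are as follows. First, set up a greedy (maximal) packing: start with any size-$r_{1}$ support and repeatedly add a new support whose symmetric difference with every previously chosen one is strictly greater than $r_{1}/2$, until no further addition is possible. Second, by maximality, every size-$r_{1}$ subset of $\{1,\dots,p_{n}\}$ lies within Hamming distance $r_{1}/2$ of some chosen support, i.e. the $\rho_{H}$-balls of radius $r_{1}/2$ centered at the chosen supports cover the whole slice of the cube. Third, bound the size of one such ball intersected with the slice: a support $T$ with $|T|=r_1$ and $\rho_H(S,T)\le r_1/2$ differs from $S$ by removing $j\le r_1/2$ elements of $S$ and adding $j$ elements of the complement, so the number of such $T$ is $\sum_{j=0}^{\lfloor r_1/4\rfloor}\binom{r_{1}}{j}\binom{p_{n}-r_{1}}{j}$, which we crudely upper bound by something like $(r_1/4+1)\binom{r_1}{r_1/4}\binom{p_n-r_1}{r_1/4}\le \big(\tfrac{4e(p_n-r_1)}{r_1}\big)^{r_1/4}\cdot(\text{poly})$ using $\binom{m}{k}\le (em/k)^k$. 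Fourth, divide the slice cardinality $\binom{p_{n}}{r_{1}}\ge (p_n/r_1)^{r_1}$ by this ball bound to get a lower bound on the number of chosen supports, and simplify using $r_{1}\le 2p_{n}/3$ (which guarantees $p_n-r_1/2$ and $p_n-r_1$ are comparable to $p_n$, keeping the logarithm positive) to arrive at the stated $\exp\!\big(\tfrac{r_{1}}{2}\log\tfrac{p_{n}-r_{1}/2}{r_{1}}\big)$.

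The main obstacle I expect is the bookkeeping in the fourth step: one must be careful that the crude binomial estimates are tight enough that the exponent does not degrade below $\tfrac{r_{1}}{2}\log\tfrac{p_{n}-r_{1}/2}{r_{1}}$ — in particular the factor $(p_n-r_1)$ versus $(p_n-r_1/2)$ and the $4e$ constant inside the ball bound need to be absorbed correctly, which is exactly why the hypothesis is $r_1\le 2p_n/3$ rather than merely $r_1< p_n$. (This is the same style of argument, with the same constants, that underlies Lemma \ref{lem:The-packing-}; indeed the two lemmas are structurally identical, one packing a size-$r_{1}$ slice inside a $p_{n}$-cube and the other a size-$r_{2}$ slice inside a $\binom{r_{1}}{2}$-cube, so the detailed computation deferred to Appendix \ref{lemma1} should transfer almost verbatim.)
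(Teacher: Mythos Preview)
Your overall strategy---a Gilbert--Varshamov volume argument---is exactly what the paper does, and you are right that the computation is structurally identical to Lemma~\ref{lem:The-packing-}. However, your step of fixing all nonzero entries to $+1$ and working only with supports is \emph{not} what the proof of Lemma~\ref{lem:The-packing-} does, and it creates precisely the bookkeeping difficulty you anticipate in your fourth step. The paper keeps the $\pm1$ signs: then $|\mathcal{H}_2|=\binom{p_n}{r_1}2^{r_1}$, while a Hamming ball of radius $r_1/2$ about any $z\in\mathcal{H}_2$ is crudely bounded by $\binom{p_n}{r_1/2}3^{r_1/2}$ (pick $r_1/2$ coordinates where $z'$ may differ from $z$ and let each take any value in $\{-1,0,1\}$, ignoring the constraint $\|z'^{(1)}\|_0=r_1$). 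Because $3^{r_1/2}<2^{r_1}$, the packing number strictly exceeds $m:=\binom{p_n}{r_1}\big/\binom{p_n}{r_1/2}$, and the exact product identity
\[
\frac{\binom{p_n}{r_1}}{\binom{p_n}{r_1/2}}=\prod_{j=1}^{r_1/2}\frac{p_n-r_1+j}{r_1/2+j}\ \ge\ \Bigl(\frac{p_n-r_1/2}{r_1}\Bigr)^{r_1/2}\qquad(\text{valid when }r_1\le 2p_n/3)
\]
delivers the stated bound with no stray constants to absorb. Your support-only ball $\sum_{j\le r_1/4}\binom{r_1}{j}\binom{p_n-r_1}{j}$ does not collapse to such a clean ratio, and the $(em/k)^k$ estimates you propose will lose a constant factor in the exponent and fail to reproduce the exact form $\tfrac{r_1}{2}\log\tfrac{p_n-r_1/2}{r_1}$. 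Simply retain the signs and follow Appendix~\ref{lemma1} verbatim, replacing $\binom{r_1}{2}$ by $p_n$ and $r_2$ by $r_1$.
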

\begin{proof}
The proof is similar to that of Lemma \ref{lem:The-packing-}.
\end{proof}

\subsection{Proof of (\ref{thm:If-,-thenlower})}
It suffices to prove under $r_{2}\leq\left(r_{1}^{2}-r_{1}\right)/4$.
Since $r_{2}(1+\log(\binom{r_{1}}{2}/r_{2}))\asymp\binom{r_{1}}{2}$
for $\frac{1}{2}\binom{r_{1}}{2}\leq r_{2}\leq\binom{r_{1}}{2}$,
the monotonicity of the minimax risk in the function class reduces
the proof to the case $r_{2}\leq\left(r_{1}^{2}-r_{1}\right)/4$.
Similarly it suffices to prove under $r_{1}\leq p_{n}/2$.

Recall that $B_{0}(r_{1},r_{2};\ddot{\mathbb{R}}_{strong}^{p_{n}})=\left\{ \beta\in\ddot{\mathbb{R}}_{strong}^{p_{n}}:\left\Vert \beta^{(1)}\right\Vert _{0}\le r_{1},\left\Vert \beta^{(2)}\right\Vert _{0}\le r_{2}\right\} $ is the coefficient space of interest and $\mathcal{F}_{0}^{}(r_{1},r_{2};\ddot{\mathbb{R}}_{strong}^{p_{n}})=\left\{ h:h(\mathbf{z})=\mathbf{z}^{T}\beta,\beta\in B_{0}(r_{1},r_{2};\ddot{\mathbb{R}}_{strong}^{p_{n}})\right\} $ is the mean regression function space. For convenience, let $h_{\theta}$, $h_{\vartheta}$ denote the regression functions with coefficents ${\theta},{\vartheta}$ respectively, i.e., $h_{\theta}(\mathbf{z})=\mathbf{z}^{T}\theta$, $h_{\vartheta}(\mathbf{z})=\mathbf{z}^{T}\vartheta$. Let $$B_{0}(r_{1},r_{2};\ddot{\mathbb{R}}_{strong}^{p_{n}})(\epsilon)=\left\{ \beta:\beta\in B_{0}(r_{1},r_{2};\ddot{\mathbb{R}}_{strong}^{p_{n}}),\left\Vert \beta\right\Vert _{2}\leq\epsilon\right\} $$
be an $l_{2}$-ball of radius $\epsilon$ around $0$ in $B_{0}^{\mathcal{}}(r_{1},r_{2};\ddot{\mathbb{R}}_{strong}^{p_{n}})$ and 
$$\mathcal{F}_{0}^{}(r_{1},r_{2};\ddot{\mathbb{R}}_{strong}^{p_{n}})(h,\epsilon_{0})=\left\{ h^\prime:h^\prime(\mathbf{z})=\mathbf{z}^{T}\beta,\beta\in B_{0}(r_{1},r_{2};\ddot{\mathbb{R}}_{strong}^{p_{n}}),d(h^\prime,h)\leq\epsilon_{0}\right\} $$
be the ball of radius $\epsilon_{0}$ around the underlying regression function $h$. Without loss of generality,
we assume $h=0$. The square root of
the empirical $l_{2}$-norm loss $d(h_{\theta},h_{\vartheta}):=\sqrt{\frac{1}{n}\sum_{i=1}^{n}(h_{\theta}(\mathbf{z}_{i})-h_{\vartheta}(\mathbf{z}_{i}))^{2}}=\frac{1}{\sqrt{n}}\left\Vert \mathbf{Z}(\theta-\vartheta)\right\Vert _{2}$ is used to measure the distance between any two functions $h_{\theta},h_{\vartheta}$. We prove the following two cases separately. 

Case 1: $\frac{r_{1}}{2}\log((p_{n}-r_{1}/2)/r_{1})\leq\frac{r_{2}}{2}\log((\binom{r_{1}}{2}-r_{2}/2)/r_{2})$. We consider the subset $\mathcal{H}_{1}^{\prime}=\left\{ \bm{\epsilon}\circ\beta:\beta\in\mathcal{H}_{1}\right\} $
of the $l_{2}$-ball $B_{0}(r_{1},r_{2};\ddot{\mathbb{R}}_{strong}^{p_{n}})(\epsilon)$,
where $\circ$ is the point-wise product of two vectors,
\[
\bm{\epsilon}=\frac{\epsilon}{\sqrt{2}}(\underset{p_{n}}{\underbrace{1/\sqrt{r_{1}},...,1/\sqrt{r_{1}}}},\underset{(p_{n}^{2}-p_{n})/2}{\underbrace{1/\sqrt{r_{2}},...,1/\sqrt{r_{2}}}})
\]
 and 
\[
\mathcal{H}_{1}=\left\{ \beta\in\mathcal{H}:\beta^{(1)}=(\underset{r_{1}}{\underbrace{1,...,1}},\underset{p_{n}-r_{1}}{\underbrace{0,...,0}}),\left\Vert \beta^{(2)}\right\Vert _{0}=r_{2}\right\} .
\]
From Lemma \ref{lem:The-packing-}, there exists a subset $\mathcal{H}_{sub}$
of $\mathcal{H}_{1}$ such that $\left|\mathcal{H}_{sub}\right|\geq\exp(\frac{r_{2}}{2}\log\frac{\binom{r_{1}}{2}-r_{2}/2}{r_{2}})$
and the pairwise Hamming distance of the elements within $\mathcal{H}_{sub}$
is greater than $r_{2}/2$.  Set $\mathcal{H}^{\prime}_{sub}:=\left\{ \bm{\epsilon}\circ\beta:\beta\in\mathcal{H}_{sub}\right\} $. For any $\theta^{\prime},\vartheta^{\prime}\in\mathcal{H}_{sub}^{\prime}$, there exist $\theta,\vartheta\in\mathcal{H}_{sub}$ such that $\left\Vert \theta^{\prime}-\vartheta^{\prime}\right\Vert _{2}=\left\Vert \bm{\epsilon}\circ\theta-\bm{\epsilon}\circ\vartheta\right\Vert _{2}\geq\frac{\epsilon}{\sqrt{2r_{2}}}\sqrt{\rho_{H}(\theta,\vartheta)}\geq\frac{\epsilon}{\sqrt{2r_{2}}}\sqrt{r_{2}/2}=\frac{\epsilon}{2}$. We also have $\left|\mathcal{H}_{sub}^{\prime}\right|=\left|\mathcal{H}_{sub}\right|$ since it is a one-to-one mapping from $\mathcal{H}_{sub}$ to $\mathcal{H}_{sub}^{\prime}$. Thus, we have $\mathcal{H}_{sub}^{\prime}\subseteq B_{0}(r_{1},r_{2};\ddot{\mathbb{R}}_{strong}^{p_{n}})(\epsilon)$ and the pairwise $l_{2}$-distance of the elements in $\mathcal{H}_{sub}^{\prime}$ is greater than $\epsilon/2$.

For any $\theta^{\prime},\vartheta^{\prime}\in \mathcal{H}_{sub}^{\prime}\subseteq B_{0}(r_{1},r_{2};\ddot{\mathbb{R}}_{strong}^{p_{n}})(\epsilon)$, let $h_{\theta^{\prime}},h_{\vartheta^{\prime}}$ be such that $h_{\theta^{\prime}}(\mathbf{z})=\mathbf{z}^{T}\theta^{\prime},h_{\vartheta^{\prime}}(\mathbf{z})=\mathbf{z}^{T}\vartheta^{\prime}$. By SRC assumption with $l_{1}=r_{1}$, $l_{2}=r_{2}$, we have
\[
b_{1}\frac{\epsilon}{2}\leq b_{1}\left\Vert (\theta^{\prime}-\vartheta^{\prime})\right\Vert _{2}\leq d(h_{\theta^{\prime}},h_{\vartheta^{\prime}})
\]

\[
d(h,h_{\vartheta^{\prime}})\leq b_{2}\left\Vert (0-\vartheta^{\prime})\right\Vert _{2}\leq b_2\epsilon.
\]
Let $\epsilon_0=b_2\epsilon$, it follows that $\mathcal{F}_{0}^{}(r_{1},r_{2};\ddot{\mathbb{R}}_{strong}^{p_{n}})(h,\epsilon_{0})$ has a subset $$\mathcal{F}_{sub}:=\left\{ h^\prime:h^\prime(\mathbf{z})=\mathbf{z}^{T}\beta,\beta\in\mathcal{H}_{sub}^{\prime},d(h^\prime,h)\leq\epsilon_{0}\right\},$$ in which the pairwise distance (in terms of $d$) of the functions are no less than $\frac{b_1}{2b_2}\epsilon_0$. This implies that the $\frac{b_{1}}{2b_{2}}$-local $\epsilon_0$-packing
entropy of $\mathcal{F}_{0}^{\mathcal{}}(r_{1},r_{2};\ddot{\mathbb{R}}_{strong}^{p_{n}})(h,\epsilon_0)$
is lower bounded by $\log|\mathcal{F}_{sub}|=\log|\mathcal{H}^{\prime}_{sub}|\geq\frac{r_{2}}{2}\log\frac{\binom{r_{1}}{2}-r_{2}/2}{r_{2}}$. So $\log M_{\mathtt{local}}^{b_{1}/(2b_{2})}(\epsilon_0)$ of $\mathcal{F}_{0}(r_{1},r_{2};\ddot{\mathbb{R}}_{strong}^{p_{n}})$
is no less than $\frac{r_{2}}{2}\log((r_{1}^{2}-r_{1}-r_{2})/2r_{2})$. Then by (7) in \citet{Yang:1999em}, the minimax risk
is lower bounded by 
\[
c_{1}\frac{\sigma^{2}\frac{r_{2}}{2}\log(\frac{r_{1}^{2}-r_{1}-r_{2}}{2r_{2}})}{n}=c_{1}\frac{\sigma^{2}}{n}\left(\frac{r_{1}}{2}\log\frac{p_{n}-r_{1}/2}{r_{1}}\lor\frac{r_{2}}{2}\log\frac{\binom{r_{1}}{2}-r_{2}/2}{r_{2}}\right),
\]
where $c_{1}>0$ is a constant that depends on $b_{1}$ and $b_{2}$
only. 

Case 2: $\frac{r_{1}}{2}\log((p_{n}-r_{1}/2)/r_{1})\geq\frac{r_{2}}{2}\log((\binom{r_{1}}{2}-r_{2}/2)/r_{2})$. We consider the subset $\mathcal{H}^{\prime}_{2}=\epsilon_{1}^{\prime}\mathcal{H}_{2}$
of $B_{0}(r_{1},r_{2};\ddot{\mathbb{R}}_{strong}^{p_{n}})(\epsilon)$,
where $\epsilon_{1}^{\prime}=\epsilon/\sqrt{r_{1}}$ and
\[
\mathcal{H}_{2}:=\left\{ \beta\in\mathcal{H}:\left\Vert \beta^{(1)}\right\Vert _{0}=r_{1},\left\Vert \beta^{(2)}\right\Vert _{0}=0\right\} .
\]
Following the same arguments above, we conclude that the minimax
is lower bounded by 
\[
c_{2}\frac{\sigma^{2}}{n}\frac{r_{1}}{2}\log\frac{p_{n}-r_{1}/2}{r_{1}}=c_{2}\frac{\sigma^{2}}{n}\left(\frac{r_{1}}{2}\log\frac{p_{n}-r_{1}/2}{r_{1}}\lor\frac{r_{2}}{2}\log\frac{\binom{r_{1}}{2}-r_{2}/2}{r_{2}}\right),
\]
where $c_{2}>0$ is a constant that depends on $b_{1}$ and $b_{2}$
only. 

Notice that when $p_{n}/r_{1}\geq2$, we have $\log(p_{n}/r_{1}-\frac{1}{2})\geq\frac{1}{10}(1+\log(p_{n}/r_{1}))$.
Similarly, we have $\log\left(\binom{r_{1}}{2}/r_{2}-\frac{1}{2}\right)\geq\frac{1}{10}\left(1+\log\left(\binom{r_{1}}{2}/r_{2}\right)\right)$
when $\binom{r_{1}}{2}/r_{2}\geq2$. Together with the fact that the lower bounds for the two cases are the same, the minimax risk is lower
bounded by 

\[
c\frac{\sigma^{2}}{n}\left(r_{1}(1+\log(\frac{p_{n}}{r_{1}}))\lor r_{2}(1+\log\frac{\binom{r_{1}}{2}}{r_{2}})\right).
\]
Thus the desired lower bound holds.
\begin{rem}
One way to interpret the imposition of the SRC assumption is that
$\left\Vert \mathbf{Z}\theta-\mathbf{Z}\beta\right\Vert _{2}^{2}$
is indeed up to a constant of the Kullback-Leibler divergence between
two joint densities (the joint distribution of the response variable
$y$ under fixed design) parameterized with $\theta$ and $\beta$
respectively. To see this, let $\mathbf{z}_{i}$ be the $i$-th row
of $\mathbf{Z}$ and we have the joint density $P_{\theta}=(2\pi)^{-n/2}\sigma^{-n}\prod_{i=1}^{n}\exp(-\frac{1}{2}(y_{i}-\mathbf{z}_{i}\theta)^{2}/\sigma^{2})$
with parameter $\theta$. The K-L distance is then $D(P_{\theta}||P_{\beta})=\frac{1}{2\sigma^{2}}\sum_{i=1}^{n}(\mathbf{z}_{i}\beta-\mathbf{z}_{i}\theta)^{2}=\frac{1}{2\sigma^{2}}\left\Vert \mathbf{Z}\theta-\mathbf{Z}\beta\right\Vert _{2}^{2}$.
\end{rem}
\subsection {Proof of Lemma \ref{lem:The-packing-}}\label{lemma1}

First we have $\left|\mathcal{H}_{1}\right|=\binom{\left(r_{1}^{2}-r_{1}\right)/2}{r_{2}}2^{r_{2}}$
since the main effects are fixed. Fix $z\in\mathcal{H}_{1}$, let
$\mathcal{A}$ denote the collection of all the points in $\mathcal{H}_{1}$
that are within $\frac{r_{2}}{2}$ Hamming distances to $z$, i.e.,
$\mathcal{A}=\left\{ z^{\prime}\in\mathcal{H}_{1}:\rho_{H}(z,z^{\prime})\leq r_{2}/2\right\} $.
It follows that the cardinality of $\mathcal{A}$ is bounded above:
\begin{eqnarray*}
\left|\mathcal{A}\right| & \leq & \binom{\binom{r_{1}}{2}}{r_{2}/2}3^{r_{2}/2}.
\end{eqnarray*}
For this upper bound, since the main effects are fixed for any point
in $\mathcal{H}_{1}$, we only need to pick $r_{2}/2$ positions of the interaction effects where
$z^{\prime}$ is different from $z$. In the remaining
interaction effect positions, $z^{\prime}$ is the same as $z$. It gives us at most $\binom{\binom{r_{1}}{2}}{r_{2}/2}$
possible choices of the $r_{2}/2$ positions out of the $\binom{r_1}{2}$ coordinates. For these $r_{2}/2$ positions, $z^{\prime}$ can
take any values in $\{-1,1,0\}$, thus the desired upper bound follows.

Let $\mathcal{B}$ be a subset of $\mathcal{H}_{1}$ such that
$\left|\mathcal{B}\right|\leq m:=\binom{\binom{r_{1}}{2}}{r_{2}}/\binom{\binom{r_{1}}{2}}{r_{2}/2}$.
Consider the collection of the points in $\mathcal{H}_{1}$
that are within $r_{2}/2$ Hamming distance to some element in $\mathcal{B}$, i.e., $\{ z\in\mathcal{H}_{1}:\rho_{H}(z,z^{\prime})\leq\frac{r_{2}}{2}\ for\ some\ z^{\prime}\in\mathcal{B}\}$. We have
\begin{eqnarray*}
 &  & \left|\left\{ z\in\mathcal{H}_{1}:\rho_{H}(z,z^{\prime})\leq\frac{r_{2}}{2}\ for\ some\ z^{\prime}\in\mathcal{B}\right\} \right|\\
 & \leq & \left|\mathcal{B}\right|\left|\mathcal{A}\right|\\
 & \leq & \frac{\binom{\binom{r_{1}}{2}}{r_{2}}}{\binom{\binom{r_{1}}{2}}{r_{2}/2}}\cdot\binom{\binom{r_1}{2}}{r_{2}/2}3^{r_{2}/2}\\
 & < & \binom{\binom{r_{1}}{2}}{r_{2}}2^{r_{2}}\\
 & = & \left|\mathcal{H}_{1}\right|.
\end{eqnarray*}
The strictly less inequality implies that for any set $\mathcal{B}\subset\mathcal{H}_{1}$
with $\left|\mathcal{B}\right|\leq m$, $\exists z\in\mathcal{H}_{1}$
such that $\rho_{H}(z,z^{\prime})>\frac{1}{2}r_{2}$ for all $z^{\prime}\in\mathcal{B}$.
By induction, we can create a set $\mathcal{B}\subset\mathcal{H}_{1}$
with $\left|\mathcal{B}\right|>m$ such that Hamming distance between
any two elements in $\mathcal{B}$ exceeds $\frac{1}{2}r_{2}$. Next, we introduce one
useful inequality. When $0\leq B\leq\frac{2}{3}A$ for $A,B\in\mathbb{N}$,
we have
\[
\frac{\binom{A}{B}}{\binom{A}{\frac{B}{2}}}=\frac{(A-\frac{B}{2})!(\frac{B}{2})!}{(A-B)!(B)!}=\prod_{j=1}^{B/2}\frac{A-B+j}{\frac{B}{2}+j}\geq\prod_{j=1}^{B/2}\frac{A-B+\frac{B}{2}}{\frac{B}{2}+\frac{B}{2}}=(\frac{A-\frac{B}{2}}{B})^{B\text{/2}}.
\]
When $r_{2}\leq(r_{1}^{2}-r_{1})/3$, we have
\begin{eqnarray*}
m & = & \frac{\binom{\binom{r_{1}}{2}}{r_{2}}}{\binom{\binom{r_{1}}{2}}{r_{2}/2}}\ge\left(\frac{\binom{r_{1}}{2}-r_{2}/2}{r_{2}}\right)^{r_{2}/2}.
\end{eqnarray*}
Thus, 
\begin{eqnarray*}
\log m & \ge & \frac{r_{2}}{2}\log\frac{\binom{r_{1}}{2}-\frac{r_{2}}{2}}{r_{2}}.
\end{eqnarray*}
The desired result follows.

\section {Proof of Theorem 4.1}
\begin{proof}
The proofs are similar to the arguments for strong heredity with
slight differences. 

To prove the upper bound under weak heredity, we instead consider the model $\hat{\mathtt{I}}=\arg\min_{\mathtt{I}\in\mathtt{I}^{weak}_{r_1,r_2}}\sum_{i=1}^{n}(Y_{i}-\hat{Y}_{i}^{\mathtt{I}})^2$ that minimizes the residual sum of squares over all the models that have $r_1$ non-zero main effects and $r_2$ non-zero interaction effects under weak heredity.
The model descriptive complexity is thus different from the strong
heredity. In this case, $C_{\mathtt{I}_{r_{1},r_{2}}^{weak}}=\log\binom{p_{n}}{r_{1}}+\log\binom{K}{r_{2}}$
with $K=r_{1}(p_{n}-(r_{1}+1)/2)$ for $1\leq r_{1}\leq p_{n}\land n\textrm{ and }0\leq r_{2}\leq(r_{1}p_{n}-\binom{r_{1}}{2}-r_{1})\land n$. The ABC criteria for the models
are defined as in (\ref{eq:1}). The same
arguments in the proof of (\ref{thm:When-,-we1}) can then be used. 

To prove the lower bound under weak heredity, we consider the set
\[
\mathcal{H}_{weak}=\left\{ \beta\in\ddot{\mathbb{R}}_{weak}^{p_{n}}:\beta\in\{-1,0,1\}^{p_{n}+\binom{p_{n}}{2}},\left\Vert \beta^{(1)}\right\Vert _{0}\leq r_{1},\left\Vert \beta^{(2)}\right\Vert _{0}\leq r_{2}\right\} .
\]
Then the two important subsets are instead
\[
\mathcal{H}_{1}=\left\{ \beta\in\mathcal{H}_{weak}:\beta^{(1)}=(\underset{r_{1}}{\underbrace{1,...,1}},\underset{p_{n}-r_{1}}{\underbrace{0,...,0}}),\left\Vert \beta^{(2)}\right\Vert _{0}=r_{2}\right\} 
\]
 and
\[
\mathcal{H}_{2}=\left\{ \beta\in\mathcal{H}_{weak}:\left\Vert \beta^{(1)}\right\Vert _{0}=r_{1},\left\Vert \beta^{(2)}\right\Vert _{0}=0\right\} .
\]

Similar metric entropy results of the above two subsets can be derived
in the same fashion as in Lemmas \ref{lem:The-packing-} and \ref{lem:If-,-then2}. Other arguments are
the same as in the proof of (\ref{thm:If-,-thenlower}). 
\end{proof}
\section {Proof of Theorem 4.2}
\begin{proof}
For the upper bound under no heredity, we consider the model
$\hat{\mathtt{I}}=\arg\min_{\mathtt{I}\in\mathtt{I}^{no}_{r_1,r_2}}\sum_{i=1}^{n}(Y_{i}-\hat{Y}_{i}^{\mathtt{I}})^{2}$
with the model descriptive complexity $C_{\mathtt{I}_{r_{1},r_{2}}^{no}}=\log\binom{p_{n}}{r_{1}}+\log\binom{\binom{p_{n}}{2}}{r_{2}}$ for $1\leq r_{1}\leq p_{n}\land n\textrm{ and }0\leq r_{2}\leq\binom{p_{n}}{2}\land n$. The ABC criteria for the models
are defined as in (\ref{eq:1}).

For the lower bound under no heredity, we consider the set
\[
\mathcal{H}_{no}=\left\{ \beta\in\ddot{\mathbb{R}}^{p_{n}}:\beta\in\{-1,0,1\}^{p_{n}+\binom{p_{n}}{2}},\left\Vert \beta^{(1)}\right\Vert _{0}\leq r_{1},\left\Vert \beta^{(2)}\right\Vert _{0}\leq r_{2}\right\} .
\]
Then the two important subsets are instead
\[
\mathcal{H}_{1}=\left\{ \beta\in\mathcal{H}_{no}:\beta^{(1)}=(\underset{r_{1}}{\underbrace{1,...,1}},\underset{p_{n}-r_{1}}{\underbrace{0,...,0}}),\left\Vert \beta^{(2)}\right\Vert _{0}=r_{2}\right\} 
\]
 and
\[
\mathcal{H}_{2}=\left\{ \beta\in\mathcal{H}_{no}:\left\Vert \beta^{(1)}\right\Vert _{0}=r_{1},\left\Vert \beta^{(2)}\right\Vert _{0}=0\right\} .
\]
Similar metric entropy results of the above two subsets can be derived
in the same fashion as Lemmas \ref{lem:The-packing-} and \ref{lem:If-,-then2}. 

Other arguments are the same as in the proofs of (\ref{thm:When-,-we1}) and (\ref{thm:If-,-thenlower}). 
\end{proof}

\section{Proof of Theorem 7.1}
The model descriptive complexity term $\lambda\sigma^{2}C_{\mathtt{I}}$
plays a fundamental role in model selection theory \citep{barron1991minimum,barron1999risk,yang1999model,wang2014adaptive}. Since we are considering models with interaction terms, the
model descriptive complexity $C_{\mathtt{I}}$ reflects our comprehension
of the model complexity other than the total number of parameters
only. The detailed designation of the descriptive complexity usually
depends on the class of models of interest. Instead
of interpreting $C_{\mathtt{I}}$ as the code length (or description
length) of describing the model index, one can also treat $\exp(-C_{\mathtt{I}})$
as the prior probability assigned to the model from a Bayesian
viewpoint.
\begin{proof}
The candidate set can be represented as the union of the candidate
sets under three heredity conditions, i.e., $\bar{\mathcal{F}}=\mathcal{F}_{strong}\cup\mathcal{F}_{weak}\cup\mathcal{F}_{no}$, with $$\mathcal{F}_{strong}:= \{\mathtt{I}_{p_{n},(p_{n}^{2}-p_{n})/2}\}\cup\{\mathtt{I}_{k_{1},k_{2}}^{strong}\},$$ $$\mathcal{F}_{weak}:= \{\mathtt{I}_{p_{n},(p_{n}^{2}-p_{n})/2}\}\cup\{\mathtt{I}_{k_{1},k_{2}}^{weak}\},$$ $$\mathcal{F}_{no}:= \{\mathtt{I}_{p_{n},(p_{n}^{2}-p_{n})/2}\}\cup\{\mathtt{I}_{k_{1},k_{2}}^{no}\}.$$
When $\ensuremath{h\in\mathcal{F}_{0}(r_{1},r_{2};\ddot{\mathbb{R}}_{strong}^{p_{n}})}$, there exists a specific model in $\mathcal{F}_{strong}$ such that the projection estimator of this model is equal to $\mathbf{h}$. Also, the projection of $\mathbf{h}$ onto the full design matrix is still $\mathbf{h}$. We denote the two models as $\mathtt{I}_{r_{1},r_{2}}$ and $\mathtt{I}_{p_{n},(p_{n})(p_n-1)/2}$ respectively.
It follows that
\begin{eqnarray}
E(\mathcal{L}(\hat{Y}^{_{\bar{\mathcal{F}}}})) & \leq & c\underset{\mathtt{I}\in\bar{\mathcal{F}}}{\inf}\left(\frac{1}{n}\left\Vert \mathbf{h}_{\mathtt{I}}-\mathbf{h}\right\Vert _{2}^{2}+\frac{\sigma^{2}r_{\mathtt{I}}}{n}+\frac{\lambda\sigma^{2}C_{\mathtt{I}}}{n}\right)\nonumber\\
 & \leq & c\underset{\mathtt{I}\in\mathcal{F}_{strong}}{\inf}\left(\frac{1}{n}\left\Vert \mathbf{h}_{\mathtt{I}}-\mathbf{h}\right\Vert _{2}^{2}+\frac{\sigma^{2}r_{\mathtt{I}}}{n}+\frac{\lambda\sigma^{2}C_{\mathtt{I}}}{n}\right)\label{eq:123}\\
 & \leq & c\left(\left\Vert \mathbf{h}_{\mathtt{I}_{r_{1},r_{2}}}-\mathbf{h}\right\Vert ^{2}+\frac{\sigma^{2}r_{\mathtt{I}_{r_{1},r_{2}}}}{n}+\frac{\lambda\sigma^{2}C_{\mathtt{I}_{r_{1},r_{2}}}}{n}\right)\nonumber\\
 &  & \land\ c\left(\left\Vert \mathbf{h}_{\mathtt{I}_{p_{n},p_{n}(p_{n}-1)/2}}-\mathbf{h}\right\Vert ^{2}+\frac{\sigma^{2}R_{\mathbf{Z}}}{n}+\frac{-\lambda\sigma^{2}\log \pi_0}{n}\right)\nonumber\\
 & = & \underset{(i)}{\underbrace{\frac{c}{n}\left(\sigma^{2}r_{\mathtt{I}_{r_{1},r_{2}}}+\lambda\sigma^{2}C_{\mathtt{I}_{r_{1},r_{2}}}\right)}}\land\underset{(ii)}{\underbrace{\frac{c}{n}\left(\sigma^{2}R_{\mathbf{Z}}-\lambda\sigma^{2}\log \pi_0\right)}},\label{eq:1234}
\end{eqnarray}
where $R_{\mathbf{Z}}$ is the rank of the full design matrix, the first inequality follows from (\ref{eq:yang}), the second inequality follows from $\mathcal{F}_{strong}\subseteq\bar{\mathcal{F}}$ and the third inequality results from the evaluation
of (\ref{eq:123}) at $\mathtt{I}_{r_{1},r_{2}}$ and $\mathtt{I}_{p_{n},p_{n}(p_n-1)/2}$. The
two terms $(i)$ and $(ii)$ are bounded as follows:
\sloppy
\begin{eqnarray*}
(i) & \leq & \frac{c_{1}\lambda}{n}\sigma^{2}\left(\frac{r_{1}+r_{2}}{\lambda}-\log \pi_1+\log p_{n}+\log{r_{1} \choose 2}+\log\binom{p_{n}}{r_{1}}+\log\binom{\binom{r_{1}}{2}}{r_{2}}\right)\\
 & \leq & \frac{c_{1}\lambda}{n}\sigma^{2}\left(\frac{r_{1}+r_{2}}{\lambda}-\log \pi_1+r_{1}(1+\log\frac{p_{n}}{r_{1}})\right.\\
 &  & \left.+\log r_{1}^{2}+r_{1}(1+\log\frac{p_{n}}{r_{1}})+r_{2}(1+\log\frac{\binom{r_{1}}{2}}{r_{2}})\right)\\
 & \leq & \frac{c_{2}}{n}\sigma^{2}\left(r_{1}(1+\log\frac{p_{n}}{r_{1}})+r_{2}(1+\log\frac{\binom{r_{1}}{2}}{r_{2}})\right),
\end{eqnarray*}
and
\begin{eqnarray*}
(ii) & \leq & \frac{c}{n}\left(\sigma^{2}R_{\mathbf{Z}}-\lambda\sigma^{2}\log \pi_0\right)\\
 & \leq & \frac{c_{3}}{n}\sigma^{2}R_{\mathbf{Z}}.
\end{eqnarray*}
Therefore, we have 
\[
E(\mathcal{L}(\hat{Y}^{_{\bar{\mathcal{F}}}}))\leq\frac{\max(c_{2},c_{3})\cdot\sigma^{2}}{n}\left[\left(r_{1}(1+\log\frac{p_{n}}{r_{1}})+r_{2}(1+\log\frac{\binom{r_{1}}{2}}{r_{2}})\right)\land R_{\mathbf{Z}}\right],
\]
where $c_{1}$, $c_{2}$, $c_{3}$ are some constants that depend
only on the constant $\lambda$. Thus the desired minimax upper bounded follows.

When $\ensuremath{h\in\mathcal{F}_{0}(r_{1},r_{2};\ddot{\mathbb{R}}_{weak}^{p_{n}})}$
or $\ensuremath{h\in\mathcal{F}_{0}(r_{1},r_{2};\ddot{\mathbb{R}}^{p_{n}})}$, with $\mathtt{I}\in\mathcal{F}_{weak}$ or $\mathtt{I}\in\mathcal{F}_{no}$ replacing $\mathtt{I}\in\mathcal{F}_{strong}$ in (\ref{eq:123}), the quantity $(i)$ in (\ref{eq:1234}) will instead be no greater than
$$\frac{c_{1}\lambda}{n}\sigma^{2}\left(\frac{r_{1}+r_{2}}{\lambda}-\log \pi_2+\log p_{n}+\log{K}+\log\binom{p_{n}}{r_{1}}+\log\binom{K}{r_{2}}\right)$$ 
with $K=r_{1}p_{n}-\binom{r_{1}}{2}-r_{1}$ under weak heredity $\ensuremath{h\in\mathcal{F}_{0}(r_{1},r_{2};\ddot{\mathbb{R}}_{weak}^{p_{n}})}$, or 
$$(i) \leq \frac{c_{1}\lambda}{n}\sigma^{2}\left(\frac{r_{1}+r_{2}}{\lambda}-\log \pi_3+\log p_{n}+\log{p_{n} \choose 2}+\log\binom{p_{n}}{r_{1}}+\log\binom{\binom{p_{n}}{2}}{r_{2}}\right)$$ 
under no heredity $\ensuremath{h\in\mathcal{F}_{0}(r_{1},r_{2};\ddot{\mathbb{R}}^{p_{n}})}$. The different constants $\pi_2, \pi_3$ does not affect the conclusion in terms of order. Following the same arguments in the proof of strong heredity, the desired results follow when the underlying heredity condition is weak heredity or no heredity.
\end{proof}

\section{An Example When SRC is
not Satisfied}\label{subsec:An-example-when}

For simplicity, let us consider an example where the regression mean function includes only one main effect term, i.e., $r_{1}=1, r_2=0$. The corresponding SRC assumption with $l_{1}=r_{1}=1, l_2=r_2=0$ will be that there exist
constants $b_{1},b_{2}>0$ (not depend on $n$) such that for any
$\beta\in\mathbb{R}^{p_n}$ with $\left\Vert \beta\right\Vert _{0}\le2$, we have
\begin{equation}
b_{1}\left\Vert \beta\right\Vert _{2}\leq\frac{1}{\sqrt{n}}\left\Vert \mathbf{Z}\beta\right\Vert _{2}\leq b_{2}\left\Vert \beta\right\Vert _{2}\label{eq:eg},
\end{equation}
where the design matrix $\mathbf{Z}=\mathbf{X}$ is the matrix that contains the main effects.

Assume the first $R_{\mathbf{Z}}$ columns of $\mathbf{Z}$ are linearly
independent and denote $\mathbf{Z}=(\mathbf{Z}^{1},\mathbf{Z}^{2})$, where $\mathbf{Z}^{1}=(\mathbf{Z}_{1},...,\mathbf{Z}_{R_{\mathbf{Z}}})$ is the $n\times R_{\mathbf{Z}}$ submatrix with $rank(\mathbf{Z}^{1})=R_{\mathbf{Z}}$.
Suppose the submatrix $\mathbf{Z}^{1}$ satisfies
the SRC assumption. Assume that $\left\Vert \mathbf{Z}_{i}\right\Vert _{2}=f(n)$
for $1\leq i\leq p_{n}$. For the purpose of illustration, we set $f(n)=\sqrt{n}$.

Let $A$ be the collection of all columns in $\mathbf{Z}^{2}$: $A=\{z|z=\mathbf{Z}^{1}\alpha,\alpha\in\mathbb{R}^{R_{\mathbf{Z}}},\left\Vert z\right\Vert _{2}=f(n)\}$.
Then $A$ should satisfy that $\forall z,z^{\prime}\in A$, we have
$b_{1}\leq\frac{1}{\sqrt{n}}\left\Vert a_{1}z+a_{2}z^{\prime}\right\Vert _{2}\leq b_{2}$
for all $a_{1},a_{2}\in\mathbb{R}$ and $a_{1}^{2}+a_{2}^{2}=1$.
We know 
\[
\frac{1}{\sqrt{n}}\left\Vert a_{1}z+a_{2}z^{\prime}\right\Vert _{2}=\frac{1}{\sqrt{n}}\sqrt{a_{1}^{2}\left\Vert z\right\Vert _{2}^{2}+a_{2}^{2}\left\Vert z^{\prime}\right\Vert _{2}^{2}+2a_{1}a_{2}\left\Vert z\right\Vert _{2}\left\Vert z^{\prime}\right\Vert _{2}\cos\theta},
\]
where $\theta$ is the angle between two $n$-dimensional vectors
$z$ and $z^{\prime}$. 

Thus we have 
\begin{eqnarray*}
 &  & \frac{1}{\sqrt{n}}\sqrt{a_{1}^{2}\left\Vert z\right\Vert _{2}^{2}+a_{2}^{2}\left\Vert z^{\prime}\right\Vert _{2}^{2}+2a_{1}a_{2}\left\Vert z\right\Vert _{2}\left\Vert z^{\prime}\right\Vert _{2}\cos\theta}\\
 & = & \frac{f(n)}{\sqrt{n}}\sqrt{a_{1}^{2}+a_{2}^{2}+2a_{1}a_{2}\cos\theta}\\
 & = & \sqrt{1+2a_{1}a_{2}\cos\theta}.
\end{eqnarray*}
Then $\sqrt{1+2a_{1}a_{2}\cos\theta}\geq b_{1}$ for all $a_{1}^{2}+a_{2}^{2}=1$
(otherwise $\frac{1}{\sqrt{n}}\left\Vert a_{1}z+a_{2}\mathbf{Z}_{i}\right\Vert _{2}$
is less than $b_{1}$, which violates the SRC assumption). Since $-1\leq2a_{1}a_{2}\leq1$
for $a_{1}^{2}+a_{2}^{2}=1$, we have $b_{1}\leq\sqrt{1-\left|\cos\theta\right|}$,
which implies $\left|\cos\theta\right|\leq1-b_{1}^{2}$. That means
the pairwise $l_{2}$ distance between any two elements in $A$ should
be greater than $\sqrt{2}b_{1}$ and less than $\sqrt{4-2b_{1}^{2}}$.
It is well known that the $\epsilon$-covering entropy of the $R_{\mathbf{Z}}$-dimensional
unit ball $\mathbb{B}$ is of order $R_{\mathbf{Z}}\log(1/\epsilon)$. We denote $\sqrt{n}\mathbb{B}$ as a ball of radius $\sqrt{n}$.
Let $\epsilon=\sqrt{2}b_{1}/2$, there exists a positive constant $c_1$ such that $\log N(\epsilon;\sqrt{n}\mathbb{B},l_{2})\leq c_1R_{\mathbf{Z}}\log(\sqrt{n}/\epsilon)=c_1R_{\mathbf{Z}}\log(\sqrt{2n}/b_1)$.
Since $A$ is a $2\epsilon$-packing set of a ball of radius $f(n)=\sqrt{n}$,
its cardinality satisfies $\log|A|\leq\log M(2\epsilon;\sqrt{n}\mathbb{B},l_{2})$.  The covering number and the packing number are closely related as
in the well-known inequality $M(\epsilon;\mathcal{X},\rho)\leq N(\frac{\epsilon}{2};\mathcal{X},\rho)\leq M(\frac{\epsilon}{2};\mathcal{X},\rho)$. Thus we have $\log|A|\leq\log M(2\epsilon;\sqrt{n}\mathbb{B},l_{2})\leq\log N(\epsilon;\sqrt{n}\mathbb{B},l_{2})\leq c_1R_{\mathbf{Z}}\log(\sqrt{2n}/b_1)$, which implies $A$ has at most $(\sqrt{2n}/b_{1})^{c_1R_{\mathbf{Z}}}$ elements
under the SRC assumption. Thus, as long as $p_{n}>(\sqrt{2n}/b_{1})^{c_1R_{\mathbf{Z}}}$, the SRC assumption will not be
satisfied because the SRC assumption requires that (\ref{eq:eg}) must hold for any pair of columns in $\mathbf{Z}$. In this case, the lower bound $r_{1}(1+\log(p_{n}/r_{1}))/n$ in our theorems does not apply.

\bibliographystyle{agsm}
\bibliography{ims-template}

\end{document}